\newtheorem{theorem}{Theorem}
\newtheorem{lemma}{Lemma}
\newtheorem{proposition}{Proposition}
\newtheorem{definition}{Definition}
\theoremstyle{remark}
\newtheorem{remark}{Remark}
\newtheorem{example}{Example}
\def\F{\mathcal{F}}
\def\G{\mathcal{G}}
\def\E{\mathcal{E}}
\def\H{\mathcal{H}}
\def\R{\mathbb{R}}
\def\B{\mathcal{B}}
\def\X{\mathcal{X}}
\def\SL{\mathcal{S\hspace{-0.03cm}L}}
\def\esssup[#1]{#1\text{-}\mathrm{ess\,sup}}
\begin{document}
\title{Quasi-sure analysis, aggregation and dual representations of sublinear expectations in general spaces}
\author{Samuel N. Cohen\thanks{Thanks to Terry Lyons and Freddy Delbaen for useful conversations during the preparation of this paper. In particular to F. Delbaen for the proof of Lemma \ref{lem:conditionalexpectdominance}.}\\samuel.cohen@maths.ox.ac.uk\\ University of Oxford}
\date{\today}

\maketitle
\begin{abstract}
We consider coherent sublinear expectations on a measurable space, without assuming the existence of a dominating probability measure. By considering a decomposition of the space in terms of the supports of the measures representing our sublinear expectation, we give a simple construction, in a quasi-sure sense, of the (linear) conditional expectations, and hence give a representation for the conditional sublinear expectation. We also show an aggregation property holds, and give an equivalence between consistency and a pasting property of measures.

Keywords: sublinear expectation, capacity, aggregation, dual representation
\end{abstract}

\section{Introduction}
Decision making in the presence of uncertain outcomes is a fundamental human activity. In many cases, we need to make decisions, not only when we do not know what the outcome of our decision will be, but when we do not even know the probabilities of different outcomes. In this setting (commonly known as Knightian uncertainty, following \cite{Knight1921}) the classical mathematical approach based on the mathematical expectation is insufficient. An alternative approach in this context is to take the `worst case' under a range of different probability measures, which leads to a form of risk-averse decision making. This approach has strong axiomatic support (see Theorem \ref{thm:dualrep}) and is amenable to mathematical analysis.

When all the probability measures we consider agree on what events will occur with probability zero, this approach is, from a mathematical perspective, a relatively straightforward generalisation of the classical theory. On the other hand, when the measures do not agree in this manner (and more generally, when there is no dominating probability measure), then many difficulties arise, cutting to the heart of the mathematical theory of probability. In particular, results which are known to hold `with probability one' in the classical setting (for example, the existence and uniqueness of the conditional expectation, martingale convergence results, the martingale representation theorem, etc...) may cease to be true in this more general setting.

In some ways, this issue may seem unreasonably abstract, however it arises even in the common case of the analysis of a Brownian motion, where the volatility is known only to lie within a given bound. This problem has been studied in various frameworks by various authors, for example, Lyons \cite{Lyons1995}, Peng and coauthors \cite{Peng2010, Peng2010a, Cohen2011}, Soner, Touzi and Zhang \cite{Soner2010, Soner2010a}, Bion-Nadal and Kervarec \cite{BionNadal2010} and  Nutz \cite{Nutz2011}, amongst many others.

In this type of analysis, the detailed structure of the mathematical spaces under consideration comes to the fore, and some technical details are needed. One option is to assume that the underlying measurable space can be viewed as a separable topological space $(\Omega, \B(\Omega))$, and then to only consider those random variables  which are quasi-continuous as functions $\Omega\to\R$. This is the approach taken in Denis et al. \cite{Peng2010a}. This is in some ways unsatisfactory, as it implies that there are events (which can be easily assigned probabilities in the classicial setting) which we refuse to consider when in the setting of uncertainty, purely due to insufficient continuity. Furthermore, by results of Bion-Nadal and Kervarec \cite{BionNadal2010}, for random variables in this class there exists a dominating probability measure, that is, there exists a measure $\theta^*$ such that a (quasi-continuous) set is null for every test measure if and only if it is $\theta^*$-null. In this sense, the problem is avoided, as classical methods can be used.

A different assumption is made in Soner, Touzi and Zhang \cite{Soner2010a}, where the set of test measures is assumed to be made up of measures in a particular separable class. In particular, they consider the measures induced on Wiener space by right-constant volatility processes satisfying some further restrictions (see Example \ref{example:uncertainvol}). Under this assumption, they prove an aggregation property, with which much of the desired analysis can be performed. This approach is possibly unsatisfying as it is restricted to the problem of volatility uncertainty, and it is not apparent how this would generalise to other situations. For example, in discrete time (as one might obtain simply by taking the $\delta$-skeleton of their setting), there is no process analogous to the volatility of the Wiener process with which to parameterise the test measures, yet some regularity assumptions on the test measures are needed.

In this paper we seek to provide such regularity assumptions, in a manner consistent with \cite{Soner2010a}. We shall assume that $\Theta$, the set of test measures, permits a Hahn-like decomposition of the underlying space $\Omega$, uniformly in all the measures in $\Theta$. A key step in the proof of the main aggregation result in \cite{Soner2010a} is to verify that a stronger version of our assumption holds (our Lemma \ref{lem:hahnlem2} holds); we show that our weaker version is sufficient to guarantee their result holds (Theorem \ref{thm:aggregation}), and that with our assumption the proof is remarkably simple. On the other hand, our assumption has a natural interpretation in any space, rather than in the particular case of uncertain volatility. We shall also show that there are natural results regarding the pasting of measures and the representation of conditional sublinear expectations which follow directly from our assumption.

\section{Sublinear expectations}
The theory of sublinear expectations lies at the heart of our study. These operators can either be defined on probability spaces, when they are related to the theory of BSDEs, or can be defined using the appraoch of quasi-sure analysis, for example the $G$-expectation of Peng \cite{Peng2010} or the 2BSDEs of Soner, Touzi and Zhang \cite{Soner2010, Soner2010a}, amongst many others. In discrete time, the theory of sublinear expectations using a quasi-sure analysis is discussed in \cite{Cohen2011}. In this work, we shall use the approach of quasi-sure analysis, and shall be quite general about the types of probability spaces under consideration.

Let $(\Omega, \F)$ be a measurable space, let $m\F$ denote the $\F/\B(\R)$-measurable real valued functions. We wish to define a sublinear expectation on this space, that is, a map taking random variables to $\R$ satisfying some useful properties. We begin by defining the space of random variables for which the expectation will be well defined.

\begin{definition} \label{defn:Hdefn}
Let $\H$ be a linear space of $\F$-measurable $\R$-valued functions on $\Omega$ containing the constants. We assume that $X\in\H$ implies $|X|\in\H$ and $I_{A} X\in\H$ for any $A\in\F$.
\end{definition}

\begin{definition}\label{defn:staticexpectations}
 A map $\E:\H\to\R$ will be called a coherent sublinear expectation if, for all $X,Y\in\H$, it is
\begin{enumerate}[(i)]
 \item (Monotone:) if $X\geq Y$ (for all $\omega$) we have $\E(X)\geq \E(Y)$,
 \item (Constant invariant:) for constants $c$, $\E(c)=c$,
 \item (Cash additive:) for constants $c$, $\E(X+c)=\E(X)+c$,
 \item (Coherent:) for all constants $c>0$, $\E(cX)=c\E(X)$, and
 \item (Sublinear:) $\E(X+Y) \leq \E(X)+\E(Y)$,
 \item (Monotone continuous:) for $X_n$ a nonnegative sequence in $\H$ increasing pointwise to $X$, $\E(X_n)\uparrow \E(X)$.
\end{enumerate}
\end{definition}

Due to its convexity, a coherent sublinear expectation has a simple representation.

\begin{theorem}[See {\cite[Theorem 3.2]{Delbaen2002}, \cite[Theorem I.2.1]{Peng2010}}]\label{thm:dualrep}
A coherent sublinear expectation has a representation
\begin{equation}\label{eq:supremumrepresentation}
\E(X) = \sup_{\theta\in\Theta} E_\theta[X]
\end{equation}
where $\Theta$ is a collection of ($\sigma$-additive) probability measures on $\Omega$. For simplicity, shall say that $\Theta$ represents $\E$.
\end{theorem}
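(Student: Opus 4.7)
The plan is to follow the classical Hahn--Banach approach to representing sublinear functionals, as in the cited references of Delbaen and Peng.

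First, I would view $\E$ as a sublinear functional on the real vector space $\H$ (sublinearity following from (iv) and (v)). For each fixed $X_0 \in \H$, the Hahn--Banach theorem extends the linear functional $\alpha X_0 \mapsto \alpha \E(X_0)$, defined on the one-dimensional subspace $\R X_0 \subset \H$, to a linear functional $\ell_{X_0} : \H \to \R$ satisfying $\ell_{X_0} \leq \E$ pointwise and $\ell_{X_0}(X_0) = \E(X_0)$. Letting $\Theta'$ be the collection of all linear functionals on $\H$ dominated by $\E$, this immediately yields $\E(X) = \sup_{\ell \in \Theta'} \ell(X)$ for every $X \in \H$.

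Next, I would verify that each $\ell \in \Theta'$ corresponds to a finitely additive probability measure. Monotonicity (i) together with constant invariance (ii) gives $\E(-X) \leq 0$ whenever $X \geq 0$, hence $-\ell(X) = \ell(-X) \leq 0$, so $\ell$ is positive; similarly $\ell(1) \leq \E(1) = 1$ and $\ell(1) \geq -\E(-1) = 1$ force $\ell(1) = 1$. The set function $\mu_\ell(A) := \ell(I_A)$ (well-defined since $I_A \in \H$ by Definition \ref{defn:Hdefn}) is then a finitely additive probability on $(\Omega, \F)$, and approximation by simple functions shows $\ell(X) = \int X \, d\mu_\ell$ for bounded $X \in \H$.

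The main obstacle is the third step: upgrading finite to $\sigma$-additivity using the monotone continuity property (vi). The subtle point is that an \emph{individual} $\ell \in \Theta'$ need not be $\sigma$-additive (ultrafilter-type functionals dominated by $\E$ typically exist), so one cannot check $\sigma$-additivity at the level of each element; continuity from below does not entail continuity from above for $\E$. Instead one restricts to the subcollection $\Theta \subseteq \Theta'$ of $\sigma$-additive probabilities and argues that this restricted supremum still recovers $\E$. In convex-analytic language, (vi) is precisely the Fatou-type lower semicontinuity ensuring that the dual representation is carried by $\sigma$-additive measures; operationally one combines a Daniell--Stone-style monotone class argument on simple functions with a Yosida--Hewitt split of a dominating $\mu_\ell \in \Theta'$ into its countably additive and purely finitely additive parts, and then uses (vi) to show the purely finitely additive part contributes nothing to the supremum. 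The detailed verification is the content of \cite[Theorem 3.2]{Delbaen2002} and \cite[Theorem I.2.1]{Peng2010}.
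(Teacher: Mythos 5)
The paper does not prove this theorem at all --- it is stated as an import, with the proof delegated to \cite[Theorem 3.2]{Delbaen2002} and \cite[Theorem I.2.1]{Peng2010} --- so there is no internal proof to compare against. Your outline follows exactly the route those references take: Hahn--Banach extension from each one-dimensional subspace to get $\E=\sup_{\ell\in\Theta'}\ell$ over the dominated linear functionals, then positivity and normalisation from (i)--(iii) to identify each $\ell$ with a finitely additive probability and recover $\ell(X)=\int X\,d\mu_\ell$ for bounded $X$ by uniform approximation with simple functions (legitimate here since Definition \ref{defn:Hdefn} puts $I_AX$ in $\H$). These two steps are correct as written. You also correctly identify the genuine difficulty --- individual elements of $\Theta'$ can be purely finitely additive (Banach-limit functionals dominated by a supremum of Dirac measures are a concrete instance), so $\sigma$-additivity cannot be checked elementwise and continuity from above is not available.

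Two caveats. First, the entire nontrivial content of the theorem is your third step, and you describe it rather than prove it, ultimately deferring to the same citations the paper uses; as a standalone proof this is incomplete, though it is no less complete than what the paper offers. Second, the phrase ``(vi) is precisely the Fatou-type lower semicontinuity ensuring that the dual representation is carried by $\sigma$-additive measures'' is misleading in this reference-measure-free setting: the Fatou property alone does \emph{not} suffice for a $\sigma$-additive representation here; it is the strictly stronger continuity from below of axiom (vi) that does the work (F\"ollmer--Schied, Theorem 4.22 in the setting of bounded measurable functions). Relatedly, the Yosida--Hewitt route needs more care than ``the purely finitely additive part contributes nothing'': the countably additive part of a maximiser, once renormalised, need not itself be dominated by $\E$, so the argument must produce nearly-maximising $\sigma$-additive measures rather than simply discarding the singular parts of arbitrary maximisers. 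If you intend this as a full proof rather than a pointer, that is the step that must be written out.
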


Once we have this representation, it is natural to wonder how far we can extend $\E$ to functions not in $\H$. Clearly we can define $\E$ for every bounded $\F$-measurable function. As we will not, in general, know that our measures in $\Theta$ will be absolutely continuous (in fact, the focus of this paper is on the case where they are not), we cannot simply complete $\F$ under some reference measure, however this leads us to the following definition.

\begin{definition}
 Let $\Theta$ be a collection of probability measures on $(\Omega, \F)$. Let $\F^\theta$ denote the completion of $\F$ under the measure $\theta$. We write
\[\F^\Theta=\bigcap_{\theta\in\Theta}\F^\theta.\]
The collection $\F^\Theta$ is a $\sigma$-algebra, and every $\theta\in\Theta$ has a unique extension to $\F^\Theta$.
\end{definition}

\begin{definition}
 A set $N\in\F^\Theta$ is called a \emph{($\Theta$-)polar set} if $\theta(N)=0$ for all $\theta\in\Theta$.
\end{definition}

\begin{remark}
 A natural alternative to the use of $\F^\Theta$ is to simply complete $\F$ by adding the polar sets. That is, if $\mathcal{N}$ denotes the polar sets, functions which are $\F\vee \mathcal{N}$-measurable are the main objects of study. By considering the set $\F^\Theta$, we allow a far richer class of functions, as is made clear by the following easy proposition. The $\sigma$-algebra $\F^\Theta$ is also used in \cite{Soner2010a} and \cite{Nutz2011}, where it is called the universal completion of $\F$.
\end{remark}

\begin{proposition}
For $\Theta$ a family of probability measures on $(\Omega, \F)$, where $\mathcal{N}$ denotes the $\Theta$-polar sets and $\F^\theta$ the completion of $\F$ under $\theta$,
\[\F\subseteq \F\vee\mathcal{N}\subseteq \F^\Theta \subseteq \F^\theta\]
for any $\theta\in\Theta$.
\end{proposition}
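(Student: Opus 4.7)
The plan is to verify each of the three inclusions separately, since all of them are essentially definitional once the objects are unpacked. I do not expect any genuine obstacle here; the point is really to confirm that the definitions line up as drawn.

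For the first inclusion $\F\subseteq \F\vee\mathcal{N}$, I would simply note that by construction $\F\vee\mathcal{N}$ is the smallest $\sigma$-algebra containing both $\F$ and $\mathcal{N}$, so $\F$ is automatically contained in it.

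For the middle inclusion $\F\vee\mathcal{N}\subseteq \F^\Theta$, I would argue that both generators lie in $\F^\Theta$ and then invoke the fact that $\F^\Theta$ is itself a $\sigma$-algebra. That $\F\subseteq \F^\Theta$ follows because $\F\subseteq \F^\theta$ for every $\theta\in\Theta$ (each $\F^\theta$ is the completion of $\F$ under $\theta$), and intersecting over $\theta$ preserves this containment. That $\mathcal{N}\subseteq \F^\Theta$ is immediate from the definition of a polar set, which requires membership in $\F^\Theta$ as part of the definition.

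For the final inclusion $\F^\Theta\subseteq \F^\theta$ for any chosen $\theta\in\Theta$, I would just observe that $\F^\Theta=\bigcap_{\theta'\in\Theta}\F^{\theta'}$ is an intersection over a family that includes $\F^\theta$, so the intersection is contained in any individual $\F^\theta$. The only non-trivial remark worth inserting in the paper is why $\F^\Theta$ is a $\sigma$-algebra (used implicitly in step two), namely that an arbitrary intersection of $\sigma$-algebras on the same underlying set is again a $\sigma$-algebra; this is already claimed in the definition preceding the proposition, so it can be cited rather than reproved.
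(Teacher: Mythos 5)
Your proof is correct; the paper states this proposition without proof (it is introduced as an ``easy proposition''), and your definitional verification of the three inclusions — in particular noting that polar sets lie in $\F^\Theta$ by definition and that $\F^\Theta$ is a $\sigma$-algebra containing $\F$ — is exactly the intended argument.
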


\begin{example}
 Let $\Omega=[0,1]$, $\F=\B(\Omega)$ and $\Theta=\{\delta_x\}_{x\in[0,1]}$, the set of discrete point-mass measures on $\Omega$. Then $\mathcal{N}=\{\emptyset\}$, so $\F\vee \mathcal{N}= \B(\Omega)$. However, $\F^\theta= 2^\Omega$ for all $\theta$, so $\F^\Theta=2^\Omega$. This is perfectly reasonable, as one can take the expectation of \emph{any} function under $\delta_x$ for any $x$, so there is no need to insist on any stronger concepts of measurability.
\end{example}

\begin{definition}\label{defn:HThetadefn}
 Let $\Theta$ be a collection of probability measures on $(\Omega, \F)$. We say that a function $X:\Omega\to\R$ is
\begin{itemize}
\item in $m\F^\Theta$ if it is $\F^\Theta$-measurable,
\item in $\H^\Theta_\F$ if $X\in m\F^\Theta$ and at least one of $E_\theta[X^+_\theta]$ and $E_\theta[X^-_\theta]$ is finite for all $\theta\in\Theta$, and
\item in $L^1(\E;\F)$ if $X\in m\F^\Theta$ and $\sup_\theta E_\theta[|X|]$ is finite (and similarly $L^p(\E;\F)$).
\end{itemize}
\end{definition}

We can now extend $\E$ to the larger space $\H^\Theta_\F$.

\begin{definition}
 We define the operator
\[\bar\E: \H^\Theta_\F \to \R, X\mapsto \sup_{\theta\in\Theta} E_\theta[X],\]

It is easy to verify that $\bar\E$ satisfies properties (i-iv) and (vi) of Definition \ref{defn:staticexpectations} with $\H$ replaced by $\H^\Theta_\F$, as a map $\H^\Theta_\F\to\R\cup\{\pm\infty\}$. It also satisfies property (v) provided all terms are well defined (in particular,  this is satisfied on $L^1(\E)$). Furthermore, comparing with Definition \ref{defn:staticexpectations} and Theorem \ref{thm:dualrep} we have $\H\subseteq\H^\Theta_\F$ and $\bar\E|_{\H} = \E$.
\end{definition}

Hereafter, we shall take $\Theta$ as fixed, and simply write $\H_\F$ for $\H^\Theta_\F$ and $\E$ for $\bar\E$, whenever this does not lead to confusion. However, we shall still distinguish between $\F$ and $\F^\Theta$.

\begin{definition} We say that a statement holds quasi-surely (q.s.) if it holds except on a polar set.
\end{definition}

\subsection{Conditional sublinear expectations}\label{sec:conditionalexp}
Suppose now that we have a sub-$\sigma$-algebra $\G\subseteq\F$. In exactly the same way as before (Definition \ref{defn:HThetadefn}), we can define the space $\H_\G^\Theta$, and it is easy to verify that $\H_\G^\Theta \subseteq \H_\F^\Theta$ and $\G^\Theta\subseteq\F^\Theta$. As before, we shall simply write $\H_\G$ for $\H_\G^\Theta$.

 We wish to consider the sublinear expectation conditional on $\G$. This is an operator satisfying the following properties.

\begin{definition} \label{defn:condexpdefn}
A pair of maps
\[\begin{split}
   \E&:\H_\F\to\R\\
   \E_\G&: L^1(\E;\F)\to L^1(\E;\G)
  \end{split}
\] is called a $\G$-consistent coherent sublinear expectation if for any $X,Y \in L^1(\E;\F)$
\begin{enumerate}[(i)]
\item $\E$ is a coherent sublinear expectation
\item (Recursivity) $\E \circ \E_\G = \E$ on $L^1(\E;\F)$, that is, $\E(\E_\G(X))=\E(X)$,
\item ($\G$-Regularity) $\E_\G(I_A Y) = I_A\E_\G(Y)$ q.s. for all $A\in \G^\Theta$.
\item $\E_\G$ satisfies the requirements of a coherent sublinear expectation \emph{$\G^\Theta$-conditionally}, that is
\begin{enumerate}
\item ($\G$-monotonicity) $X\geq Y$ implies $\E_\G(X)\geq \E_\G(Y)$ q.s.
\item ($\G$-triviality) $\E_\G(Y)=Y$ q.s. for all $Y\in L^1(\E;\G)$.
\item ($\G$-cash additivity) $\E_\G(X+Y)= \E_\G(X)+Y$ q.s. for all $Y\in L^1(\E;\G)$.
\item ($\G$-sublinearity) $\E_\G(X+Y) \leq \E_\G(X)+\E_\G(Y)$ q.s.
\item ($\G$-coherence) $\E_\G(\lambda Y) = \lambda^+ \E_\G(Y) + \lambda^-\E_\G(-Y)$ q.s. for all $\lambda\in m\G^\Theta$ with $(\lambda Y)\in L^1(\E;\G)$.
\end{enumerate}
\end{enumerate}
\end{definition}

The following simple lemma gives uniqueness of the conditional expectation.
\begin{lemma}
 For a given coherent sublinear expectation $\E$, a given $\G\subseteq\F$, there exists at most one conditional coherent sublinear expectation $\E_\G$, up to equality q.s.
\end{lemma}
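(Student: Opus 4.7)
The plan is to show directly that any two conditional coherent sublinear expectations $\E_\G^1$ and $\E_\G^2$ satisfying Definition \ref{defn:condexpdefn} must agree q.s.\ on every $X \in L^1(\E;\F)$. By symmetry it suffices to prove that the $\G^\Theta$-measurable set $A = \{\E_\G^1(X) > \E_\G^2(X)\}$ is polar; applying the same argument with the roles of $\E_\G^1$ and $\E_\G^2$ reversed then completes the proof.

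The main difficulty is that sublinearity of $\E$ blocks any naive ``$\E(U)-\E(V)=\E(U-V)$'' manipulation, so I first convert the difference into a single application of a conditional expectation. Setting $Y := X - \E_\G^2(X) \in L^1(\E;\F)$ and applying $\G$-cash additivity with the $\G^\Theta$-measurable $L^1$ term $-\E_\G^2(X)$ gives, q.s.,
\[\E_\G^2(Y) = 0, \qquad \E_\G^1(Y) = \E_\G^1(X) - \E_\G^2(X).\]
Multiplying by $I_A$ and using $\G$-regularity then yields $\E_\G^2(I_A Y) = 0$ q.s.\ and $\E_\G^1(I_A Y) = I_A(\E_\G^1(X) - \E_\G^2(X)) \geq 0$ q.s., with strict positivity precisely on $A$.

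Now recursivity, applied twice, collapses both sides onto $\E(I_A Y)$: on the one hand $\E(I_A Y) = \E(\E_\G^2(I_A Y)) = \E(0) = 0$, and on the other $\E(I_A Y) = \E(\E_\G^1(I_A Y)) = \E(I_A(\E_\G^1(X) - \E_\G^2(X)))$. Since $\E = \sup_{\theta\in\Theta} E_\theta$ by Theorem \ref{thm:dualrep} and the integrand is pointwise nonnegative, this forces $E_\theta[I_A(\E_\G^1(X)-\E_\G^2(X))] = 0$, and hence $\theta(A) = 0$, for every $\theta \in \Theta$; that is, $A$ is polar.

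The key trick is the use of cash additivity to absorb $\E_\G^2(X)$ inside the conditional expectation before invoking recursivity, which sidesteps the sublinearity obstruction and reduces everything to a one-line nonnegativity argument on a single $\E$-value. Beyond isolating this manoeuvre, the proof is a routine axiomatic chase, and no use is made of monotone continuity or of any additional structure on $\Theta$.
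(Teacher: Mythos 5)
Your proof is correct and takes essentially the same route as the paper's: cash additivity to reduce to $\E_\G^2(X-\E_\G^2(X))=0$, then regularity and recursivity to obtain $\E\bigl(I_A(\E_\G^1(X)-\E_\G^2(X))\bigr)=0$, and finally polarity of $A$. The only cosmetic difference is that you conclude directly from nonnegativity of the integrand under each $E_\theta$, whereas the paper passes through the level sets $\{\E_\G^1(X)>\E_\G^2(X)+n^{-1}\}$ and a countable union; both are fine.
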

\begin{proof}
 For a given $X$, suppose $\E_\G$ and $\bar\E_\G$ are two versions of the conditional expectation. By the $\G$-triviality and cash additivity properties, we can see that $\bar\E_\G(X-\bar\E_\G(X))=0$ q.s., and hence by regularity, for any $A\in\G^\Theta$ we have $\E(I_A(X-\bar\E_\G(X)))=0$. Similarly we see that
\[\E(I_A(\E_\G(X)-\bar\E_\G(X))) = \E(\E_\G(I_A(X-\bar\E_\G(X))))= \E(I_A(X-\bar\E_\G(X)))=0.\]
Therefore, taking $A_n=\{\omega:\E_\G(X) > \bar\E_\G(X) + n^{-1}\}\in\G^\Theta$, we have
\[0\leq \E(I_{A_n} n^{-1})\leq \E(I_{A_n}(\E_\G(X)-\bar\E_\G(X)))= 0\]
and hence $\E(I_{A_n})$ is polar. Therefore $\cup_n A_n$ is polar, that is, $\E_\G(X) \leq \bar\E_G(X)$ q.s. Reversing the roles of $\E_\G$ and $\bar\E_\G$ yields the reverse inequality.
\end{proof}

Note that, as in the classical case, we shall only require in the definition that $\E_\G$ is well defined on $L^1(\E;\F)$. However, it will often be the case (cf Remark \ref{rem:conditionaldomain}) that the conditional expectation is well defined on a space of functions with significantly less integrability.

\section{Representing the conditional expectation}
For a given $\G$-consistent sublinear expectation $\E$, we wish to have a representation of the conditional expectation $\E_\G$ similar to that in Theorem \ref{thm:dualrep}. That is, we wish to write
\begin{equation}\label{eq:supremumrepcond}
``\E_\G(X) = \sup_{\theta\in\Theta} E_\theta[X|\G]\text{.''}
\end{equation}
This statement has two key problems. First, the conditional expectation $E_\theta[\cdot|\F_t]$ is only defined $\theta$-a.s. rather than $\E$-q.s. When $\Theta$ consists of uncountably many possibly singular probability measures, this causes a significant problem. Second, if $\Theta$ is uncountable, the pointwise supremum may be an inappropriate choice, as it is unclear whether it is even in $m\G^\Theta$.

To deal with these issues, we shall first assume that our set of measures satisfies a certain decomposition property, which is a generalisation of the separability assumed in Soner et al. \cite{Soner2010a}. Under this assumption, we shall be able to give a consistent definition of the conditional expectation under $\theta$, in a quasi-sure sense. We then follow Detlefsen and Scandolo \cite{Detlefsen2005} in replacing the supremum in (\ref{eq:supremumrepcond}) with an essential supremum, which we construct quasi-surely. Hence, we show that the representation is valid. It is worth also noting the work of Bion-Nadal \cite{Bion-Nadal2006}, where a similar representation is obtained (for the larger class of convex risk measures under uncertainty, that is, without the assumption of coherence) however no consideration is given to the construction of the conditional expectation in a quasi-sure sense.

\begin{definition}
For $\G\subseteq\F$, we shall write $\Theta|_\G$ for the set of measures $\theta\in\Theta$, all restricted to $\G$.
\end{definition}
\subsection{Defining linear conditional expectations}
Our key tool for the definition of the conditional expectation, in a sufficiently strong sense, will be the assumption that the following property holds.
\begin{definition}[Hahn property]\label{def:Hahn}
We shall say that $\Theta$ has the \emph{Hahn property on $\G$} if there exists a `dominating' set of probability measures $\Phi$ defined on $(\Omega, \G)$ such that
\begin{enumerate}[(i)]
 \item $\Phi$ and $\Theta|_\G$ generate the same polar sets and $m\G^\Theta=m\G^\Phi$,
 \item for every $\phi\in\Phi$, there is  a set $S_{(\phi, \G)}\in\G^\Theta$ that supports $\phi$, that is, \[\phi(S_{(\phi, \G)}) = 1,\]
 such that the sets $\{S_{(\phi,\G)}\}_{\phi\in\Phi}$ are disjoint.
\end{enumerate}
The collection $\{S_{(\phi, \G)}\}_{\phi\in\Phi}$, with the associated measures $\Phi$, will be called a $\Theta/\G$-dominating partition of $\Omega$. (Note that $\{S_{(\phi, \G)}\}$ is a $\G^\Theta$-measurable partition of $\Omega$ minus a polar set.)
\end{definition}
Note that the `dominating' set $\Phi$ is not assumed to be countable. The reason for giving this name to the property will be outlined in Remark \ref{rem:namereason}. The following example shows that the existence of a Hahn decomposition is not trivial in general.
\begin{example}
 Consider the space $\Omega=[0,1]^2$ with its Borel $\sigma$-algebra. For simplicity, we take $\G=\B(\omega_1)$, the Borel $\sigma$-algebra generated by the first component of $\Omega$. Let $\Theta=\{\delta_{(x,y)}: (x,y)\in[0,1]^2\}$, the family of single-point measures on $\Omega$. Then $\Theta$ has the Hahn property, with $\Theta=\Phi$ and $S_{(\delta_{(x,y)}, \G)}=\{x\}\times[0,1]$. The set of measures obtained by taking all countable mixtures of elements of $\Theta$ will also have the Hahn property, a $\Theta/\G$-dominating partition being the sets $\{\{x\}\times[0,1]\}_{x\in[0,1]}$.

Conversely, if $\Theta' = \Theta \cup \{\lambda\}$, where $\lambda$ is Lebesgue measure on $[0,1]^2$, then $\Theta'$ does not have the Hahn property. This is because any dominating set $\Phi$ must generate no non-empty polar sets, and for every point $x$ there is a measure $\phi\in \Phi$ such that $\phi(x)>0$. As the supports of the measures in $\Phi$ are disjoint, $\Phi$ must be built up only of measures supported by countably many points. This implies, however, that all functions are $\G^\phi$-measurable for each $\phi\in\Phi$, so all functions are in $m\G^\Phi$. On the other hand, $m\G^\Theta$ only contains Lebesgue measurable functions, so we see that $m\G^\Phi \neq m\G^\Theta$.
\end{example}

\begin{example}
 Suppose there exists a dominating measure $\phi$ on $\G^\Theta$, that is, $\theta|_\G$ is absolutely continuous with respect to $\phi$ for all $\theta\in\Theta$ and without loss of generality $\phi(A)=0$ for all polar sets $A$. Then $\Theta$ has the Hahn property with $\Phi = \{\phi\}$.
\end{example}

The usefulness of the Hahn property is due to the following simple lemma.

\begin{lemma}\label{lem:hahnlem3}
 Let $\Theta$ have the Hahn property on $\G$ and let $A\in \G^\Theta$ with $A\subseteq S_{(\phi, \G)}$ for some $\phi\in\Phi$. Then $A$ is polar if and only if $A$ is $\phi$-null.

 Hence for every $\theta\in\Theta$, every $\phi\in\Phi$, we know $\theta|_\G$ is absolutely continuous with respect to $\phi$ on $S_{(\phi, \G)}$.
\end{lemma}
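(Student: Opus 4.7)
The approach I would take is to exploit both ingredients of the Hahn property simultaneously: clause (i), that $\Phi$ and $\Theta|_\G$ generate the same polar sets, together with the disjointness of the family $\{S_{(\phi',\G)}\}_{\phi'\in\Phi}$ and the fact that each $\phi'$ puts full mass on its own support $S_{(\phi',\G)}$.

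The forward direction (polar implies $\phi$-null) is essentially immediate. If $A\in\G^\Theta$ is $\Theta$-polar, then by clause (i) of the Hahn property it is also $\Phi$-polar, so in particular $\phi(A)=0$.

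For the reverse direction I would argue as follows. Assume $\phi(A)=0$. Pick any $\phi'\in\Phi$ with $\phi'\neq \phi$. Since the supports are pairwise disjoint and $A\subseteq S_{(\phi,\G)}$, we have $A\cap S_{(\phi',\G)}=\emptyset$, so $A\subseteq\Omega\setminus S_{(\phi',\G)}$; combined with $\phi'(S_{(\phi',\G)})=1$ this forces $\phi'(A)=0$. Together with the standing hypothesis $\phi(A)=0$, this gives $\phi'(A)=0$ for every $\phi'\in\Phi$, i.e., $A$ is $\Phi$-polar, and one more application of clause (i) promotes this back to $\Theta$-polarity. The ``hence'' clause about absolute continuity of $\theta|_\G$ with respect to $\phi$ on $S_{(\phi,\G)}$ then drops out: any $\phi$-null $A\in\G^\Theta$ contained in $S_{(\phi,\G)}$ is polar by the equivalence just shown, so $\theta(A)=0$ for every $\theta\in\Theta$.

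I do not anticipate any real obstacle; the only thing to be careful about is keeping the definitions straight (that $A\in\G^\Theta\subseteq\F^\Theta$ so evaluating both $\phi$ and $\theta|_\G$ on $A$ is legitimate, and that ``polar'' and ``$\Phi$-polar'' genuinely refer to the same collection of sets by clause (i)). The proof is really just a bookkeeping exercise that makes explicit the reason the Hahn property was formulated with disjoint supports: disjointness plus clause (i) is precisely what ensures each $\phi$ captures the null-set structure of $\Theta|_\G$ on its own support piece.
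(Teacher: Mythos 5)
Your proof is correct and follows essentially the same route as the paper's: the forward direction is clause (i) of the Hahn property, and the reverse direction combines disjointness of the supports with $\phi'(S_{(\phi',\G)})=1$ to conclude $\Phi$-polarity, then invokes clause (i) again. The explicit treatment of the ``hence'' clause is a small addition the paper leaves implicit, but nothing differs in substance.
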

\begin{proof}
 By assumption (i) of the Hahn property, all polar sets must be $\phi$-null for every $\phi\in\Phi$. Conversely, as $A\subseteq S_{(\phi, \G)}$ and the supports $\{ S_{(\psi, \G)}\}_{\psi\in\Phi}$ are disjoint, $\psi(A)=0$ for every $\psi\neq \phi$, $\psi\in\Phi$. As $\phi(A)=0$ also, we know that $A$ is $\Phi$-polar, and hence is $\Theta$-polar.
\end{proof}

In some cases, the Hahn property may be most easily verified using the following lemma.
\begin{lemma}\label{lem:hahnlem2}
 Suppose there exists a subset $\Phi\subseteq \Theta$ with disjoint supports $\{S_{(\phi, \G)}\}_{\phi\in\Phi}$, such that for any $\theta\in\Theta$ there exists a countable set $\{\phi^\theta_n\}\subseteq\Phi$ with
\begin{itemize}
 \item $\bigcup_n S_{(\phi^\theta_n, \G)}$ supports $\theta$, and
 \item $\theta|_\G$ is absolutely continuous with respect to $\phi^\theta_n|_\G$ on $S_{(\phi^\theta_n,\G)}$.
\end{itemize}
 Then $\Theta$ has the Hahn property (and $\Phi|_\G$ is a $\Theta/\G$-dominating partition).
\end{lemma}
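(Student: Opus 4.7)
The plan is to check both clauses of Definition \ref{def:Hahn} for $\Phi|_\G$ with the partition $\{S_{(\phi,\G)}\}_{\phi\in\Phi}$ supplied by the hypothesis. Clause (ii) (disjoint supports) is part of the hypothesis itself, so all the work lies in clause (i): showing that $\Phi$ and $\Theta|_\G$ generate the same polar sets and that $m\G^\Theta = m\G^\Phi$. I would establish the $\sigma$-algebra equality first and then read off the polar set agreement from the same construction.

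The inclusion $\G^\Theta \subseteq \G^\Phi$ is immediate because $\Phi\subseteq\Theta$. For the reverse, fix $B\in\G^\Phi$ and an arbitrary $\theta\in\Theta$; the task is to exhibit $\G$-measurable envelopes that sandwich $B$ up to a $\theta$-null set. Apply the hypothesis to $\theta$ to obtain the countable family $\{\phi^\theta_n\}$ with supports $S_n := S_{(\phi^\theta_n,\G)}$. Since $B\in\G^{\phi^\theta_n}$, for each $n$ choose $\G$-measurable $B_n^-\subseteq B\subseteq B_n^+$ with $\phi^\theta_n(B_n^+\setminus B_n^-)=0$. Paste along the partition by setting
\begin{equation*}
E^- := \bigcup_n (B_n^-\cap S_n), \qquad E^+ := \bigcup_n (B_n^+\cap S_n) \,\cup\, \Bigl(\Omega\setminus\bigcup_n S_n\Bigr).
\end{equation*}
Then $E^-\subseteq B\subseteq E^+$, and disjointness of the $S_n$'s lets one split
\begin{equation*}
E^+\setminus E^- \;=\; \bigcup_n\bigl((B_n^+\setminus B_n^-)\cap S_n\bigr)\,\cup\,\Bigl(\Omega\setminus\bigcup_n S_n\Bigr).
\end{equation*}
The second piece is $\theta$-null because $\bigcup_n S_n$ supports $\theta$, and absolute continuity of $\theta|_\G$ with respect to $\phi^\theta_n|_\G$ on $S_n$ annihilates each piece $(B_n^+\setminus B_n^-)\cap S_n$. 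Thus $\theta(E^+\setminus E^-)=0$ and $B\in\G^\theta$; since $\theta\in\Theta$ was arbitrary, $B\in\G^\Theta$.

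Polar set agreement is then a by-product of the same mechanism: if $N\in\G^\Phi=\G^\Theta$ is $\Phi$-polar, the covering gives $\theta(N)\le\sum_n\theta(N\cap S_n)=0$, each summand vanishing because $\phi^\theta_n(N)=0$ pushes through the absolute continuity; the converse direction is trivial since $\Phi\subseteq\Theta$. Combined with the already-given disjointness, both clauses of Definition \ref{def:Hahn} are verified.

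The main obstacle is the measurability bookkeeping in the pasting step: one needs the supports $S_n$ to lie in $\G$ so that $E^\pm$ are $\G$-measurable. If the hypothesis's ``supports'' are only provided in some completion, one first replaces each $S_n$ by a $\G$-measurable version that agrees with it up to a $\phi^\theta_n$-null set, which is harmless for the rest of the argument. Once this technicality is dispensed with, the pasting-and-absolute-continuity argument is essentially mechanical.
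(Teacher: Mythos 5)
Your proof is correct and follows essentially the same route as the paper's: decompose each $\theta\in\Theta$ over the countable family of supports $S_{(\phi^\theta_n,\G)}$, use absolute continuity with respect to $\phi^\theta_n|_\G$ on each piece, and the support condition to kill the complement of their union, both for the polar sets and for the completion. The only cosmetic difference is that you verify $\G^\Phi\subseteq\G^\Theta$ by sandwiching sets where the paper modifies measurable functions, and the technicality you flag at the end is harmless as stated in the paper (the $S_{(\phi^\theta_n,\G)}$ already lie in $\G^\Theta\subseteq\G^\theta$, and the completion $\G^\theta$ is closed under sandwiching between $\G^\theta$-sets whose difference is $\theta$-null, so no replacement by $\G$-measurable versions is needed).
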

\begin{proof}
We only need to show that $\Phi$ and $\Theta$ generate the same polar sets in $\G$ and $m\G^\Theta=m\G^\Phi$. As $\Phi\subseteq\Theta$, any $\Theta$-polar set is clearly $\Phi$-polar and $m\G^\Phi\supseteq m\G^\Theta$.

For the converse, for any $\theta\in\Theta$, by assumption there is a countable set $\{\phi^\theta_n\}$ in $\Phi$ such that $\bigcup_n S_{(\phi^\theta_n,\G)}$ supports $\theta$. For any $\Phi$-polar $A\in\G^\Phi$, we then have
\[\theta (A) = \sum_n \theta (A\cap S_{(\phi^\theta_n,\G)}).\]
However, $\theta|_\G$ is absolutely continuous with respect to $\phi^\theta_n|_\G$ on $S_{(\phi^\theta_n,\G)}$, so if $\phi^\theta_n(A)=0$ we have $\theta (A\cap S_{(\phi^\theta_n,\G)})=0$. Hence $\theta(A)=0$, and as $\theta$ was arbitrary we know $A$ is $\Theta$-polar.

Similarly, if $X\in m\G^\Phi$, then for any $\theta\in\Theta$ we have the countable set $\{\phi_n^\theta\}$, and for each $n$, we see that $X$ differs from a $\G$-measurable function on a $\phi^\theta_n$-null set. On $S_{(\phi^\theta_n, \G)}$, we know $\theta$ is absolutely continuous with respect to $\phi^\theta_n$, so there is a $\G$-measurable function $\tilde X$ such that $\{X\neq \tilde X\}\cap S_{(\phi^\theta_n, \G)}$ is $\theta$-null. From the representation \[X=\sum_n I_{S_{(\phi^\theta_n, \G)}} X\qquad \theta-a.s.,\] we see that $X\in\G^\theta$ for all $\theta$, so $X\in m\G^\Theta$.
\end{proof}

We can now see that the setting of Soner et al. \cite{Soner2010a} has the Hahn property.

\begin{example}\label{example:uncertainvol}
Let $\Omega$ be the classical Wiener space, with canonical process $B$ starting at zero. Let $\F_t=\sigma\{B_s\}_{0\leq s\leq t}$, and $\G=\F_t$ for some $t$. Let $\langle B\rangle$ be the quadratic variation, which is a progressively measurable continuous function and can be universally defined for all local martingale measures on $B$, as in Karandikar \cite{Karandikar1995} (this is a  scalar version of the setting of \cite{Soner2010a}, see also Nutz \cite{Nutz2011}).

Consider the set of orthogonal measures $\theta$ parameterised by some subset of the $\F$-predictable absolutely continuous nonnegative functions, where under $\theta_v$, $B$ is a local martingale with quadratic variation $v$. Then we can take $S_{(\theta_v, \G)} = \{\omega:\langle B\rangle_s = v_s \text{ for all }s\leq t\}$, which is a $\G$-measurable set. Soner et al. \cite{Soner2010a} take $v$ of the form
\[\frac{dv}{dt} = \sum_{n=0}^\infty\sum_{i=0}^\infty a^i_n I_{E^n_i}I_{[\tau_n,\tau_{n+1}[},\]
where the $(a^i_n)$ come from a generating class (for example, the class of deterministic processes), the $(\tau_n)$ is an increasing sequence of stopping times taking countably many values and q.s. reaching $\infty$ for finite $n$, and $\{E^n_i\}\subset\F_{\tau_n}$ is a family of partitions of $\Omega$. Such processes $v$ are said to satisfy the separability condition.

We claim the measures associated with the generating class, restricted to $\G$, form a $\Theta/\G$-dominating partition of $\Omega$ (up to repeated sets in the partition). Under the separability condition, the measures associated with the generating class, restricted to $\G$, have either identical or disjoint supports and are included in $\Theta$. As every measure in $\Theta$ is generated by a countable collection of elements of the generating class, the first requirement of our Lemma \ref{lem:hahnlem2} is satisfied. Lemma 5.2 of \cite{Soner2010a} then proves the equivalence (in fact, the equality) of any two measures in $\Theta$ on the intersection of their supports, yielding the second condition of our Lemma \ref{lem:hahnlem2}.
\end{example}

\subsection{The essential supremum}

It is useful to be able to combine families of random variables in a quasi-surely consistent manner. A key tool for doing this is the essential supremum, which we now construct in a quasi-sure sense. To begin, we cite the following result on the existence of the essential supremum in a classical setting.

\begin{theorem}[F\"ollmer and Schied \cite{Follmer2002} (Thm A.18)]\label{thm:classicalessentialsup}
Let $\X$ be any set of $\G$-measurable random variables on a (complete) probability space $(\Omega, \G, \theta)$.
\begin{enumerate}[(i)]
 \item Then there exists a random variable $X^*$ such that $X^*\geq X$ $\theta$-a.s. for all $X\in\X$. Moreover $X^*$ is $\theta$-a.s. unique in the sense that any other random variable $Y$ with this property satisfies $Y\geq X$ $\theta$-a.s. We call $X^*$ the $\theta$-essential supremum of $\X$, and write $X^*=\esssup[\theta]\X$.
\item Suppose that $\X$ is upward directed, that is, for $X, X'\in\X$ there is $X''\in \X$ with $X''\geq X\vee X'$. Then there exists an increasing sequence $X_1\leq X_2 ...$ in $\X$ such that $X^*= \lim_n X_n$ $\theta$-a.s.
\end{enumerate}
\end{theorem}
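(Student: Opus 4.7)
The plan is to reduce the problem to a maximization of a bounded functional and then extract the essential supremum as the least upper bound of a cleverly constructed countable family. The whole argument rests on the fact that, on a probability space, one can always turn a supremum over an arbitrarily large class of random variables into a supremum over a countable sub-class when measured through expectations.

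First I would truncate: replace each $X\in\X$ by $\tilde X = \arctan(X)$, which is bounded and preserves ordering. Since an essential supremum for $\{\tilde X : X\in\X\}$ will, after applying $\tan$, give an essential supremum for $\X$, it suffices to assume $|X|\leq 1$ for all $X\in\X$. Next, let $\X^\sigma$ denote the collection of random variables of the form $\max\{X_1,\dots,X_n\}$ with $X_i\in\X$ and $n\in\N$; this class contains $\X$ and is closed under finite maxima. Define
\[
c \;=\; \sup\bigl\{E_\theta[Y] : Y\in\X^\sigma\bigr\} \;\leq\; 1.
\]
Pick a sequence $Y_n\in\X^\sigma$ with $E_\theta[Y_n]\uparrow c$; replacing $Y_n$ by $\max\{Y_1,\dots,Y_n\}$ (still in $\X^\sigma$) makes the sequence monotone increasing. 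Set
\[
X^* \;=\; \lim_{n\to\infty} Y_n,
\]
which exists pointwise and is $\G$-measurable.

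To verify that $X^*$ is the $\theta$-essential supremum, take any $X\in\X$ and consider $Y_n\vee X\in\X^\sigma$. By definition of $c$, $E_\theta[Y_n\vee X]\leq c$, while monotone convergence gives $E_\theta[Y_n\vee X]\uparrow E_\theta[X^*\vee X]$. Hence $E_\theta[X^*\vee X]\leq c = E_\theta[X^*]$, which forces $X^*\vee X = X^*$ $\theta$-a.s., i.e. $X^*\geq X$ $\theta$-a.s. For minimality, if $Z$ is any other $\G$-measurable random variable dominating every $X\in\X$ almost surely, then $Z\geq Y_n$ $\theta$-a.s. for each $n$ (a countable collection, so the null sets accumulate harmlessly), whence $Z\geq X^*$ $\theta$-a.s. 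Almost-sure uniqueness follows immediately by applying this in both directions.

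For part (ii), when $\X$ is upward directed, each finite maximum $\max\{X_1,\dots,X_n\}$ is dominated $\theta$-a.s. by some element of $\X$, so one can inductively choose $X_n\in\X$ with $X_n\geq \max\{X_{n-1},Y_n\}$ $\theta$-a.s. The resulting sequence is monotone, remains in $\X$, and dominates the $Y_n$, so $\lim_n X_n\geq X^*$ $\theta$-a.s., while the reverse inequality follows from part (i) applied to each $X_n\in\X$. The main obstacle in the argument is conceptual rather than technical: seeing that the potentially uncountable class $\X$ can be reduced, through the expectation functional on a probability space, to a countable monotone subfamily; once that reduction is in place, everything else is monotone convergence and the completeness of $\G$.
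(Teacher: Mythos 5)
Your proof is correct; the paper does not prove this statement but simply cites it from F\"ollmer and Schied, and your argument is essentially the standard one given there (truncate to a bounded family, close under finite maxima, maximise the expectation over a countable monotone subfamily, and conclude by monotone convergence). The only cosmetic points are the empty-family case and the fact that $X^*$ may be extended-real-valued after undoing the $\arctan$ truncation, neither of which affects the argument.
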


We can extend the first half of this result to our setting, using the Hahn property.

\begin{theorem}\label{thm:quasiessentialsup}
Suppose $\Theta$ is a collection of measures with the Hahn property on $\G$. Then for any set $\X\subset m\G^\Theta$, the result of Theorem \ref{thm:classicalessentialsup}(i) holds, where all random variables are taken to be in $m\G^\Theta$, and inequalities are taken to hold q.s. For clarity, we denote the $\Theta$-q.s. essential supremum by $\esssup[\Theta]$.
\end{theorem}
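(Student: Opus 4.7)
The plan is to glue together classical essential suprema cell by cell along the Hahn partition. Let $\Phi$ and $\{S_{(\phi,\G)}\}_{\phi\in\Phi}$ be the dominating set and partition provided by Definition \ref{def:Hahn}. For each $\phi\in\Phi$, I work on the complete probability space $(\Omega,\G^\phi,\phi)$, which is legitimate because $\X\subset m\G^\Theta\subseteq m\G^\phi$, and invoke Theorem \ref{thm:classicalessentialsup}(i) to obtain a $\phi$-essential supremum $X^*_\phi$. I then define the candidate $X^*$ pointwise by
\[
X^*(\omega) := X^*_\phi(\omega) \quad\text{if } \omega\in S_{(\phi,\G)}, \qquad X^*(\omega) := 0 \quad\text{otherwise,}
\]
which is unambiguous because the supports $\{S_{(\phi,\G)}\}$ are disjoint.

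Three properties need verifying. For \emph{measurability}, observe that for each fixed $\phi$ the set $\Omega\setminus S_{(\phi,\G)}$ is $\phi$-null, so $X^*$ agrees with $X^*_\phi$ off a $\phi$-null set and therefore lies in $m\G^\phi$. Intersecting over $\Phi$ gives $X^*\in m\G^\Phi$, which equals $m\G^\Theta$ by clause (i) of the Hahn property. For \emph{dominance}, fix $X\in\X$ and let $A=\{X^*<X\}$; for every $\phi\in\Phi$,
\[
\phi(A) \;=\; \phi\bigl(A\cap S_{(\phi,\G)}\bigr) \;\leq\; \phi\bigl(\{X^*_\phi<X\}\bigr) \;=\; 0,
\]
so $A$ is $\Phi$-polar, and hence $\Theta$-polar by the Hahn property. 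For \emph{uniqueness}, suppose $Y\in m\G^\Theta$ also dominates every $X\in\X$ quasi-surely; then in particular $Y\geq X$ holds $\phi$-a.s. for each $X\in\X$, so the uniqueness clause of Theorem \ref{thm:classicalessentialsup}(i) gives $Y\geq X^*_\phi$ $\phi$-a.s., and the same cellwise computation makes $\{Y<X^*\}$ polar.

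The main potential obstacle is the measurability step: a function defined piecewise along an uncountable partition need not, in general, be measurable in any reasonable sense. The Hahn property sidesteps this difficulty exactly because each individual $\phi\in\Phi$ gives full mass to a single cell $S_{(\phi,\G)}$, so both measurability and the dominance inequality reduce to checks one $\phi$ at a time; the identification $m\G^\Phi=m\G^\Theta$ and the equivalence of $\Phi$-polar and $\Theta$-polar sets then transfer everything back to the $\Theta$-quasi-sure setting, and no uncountable summation or union argument is ever invoked.
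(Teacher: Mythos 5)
Your proof is correct and follows essentially the same route as the paper's: construct the classical $\phi$-essential supremum on each cell of the $\Theta/\G$-dominating partition, glue the pieces along the disjoint supports, and use clause (i) of the Hahn property to transfer measurability ($m\G^\Phi=m\G^\Theta$) and nullity ($\Phi$-polar implies $\Theta$-polar) back to the quasi-sure setting. The only differences are cosmetic — you set $X^*=0$ off the partition where the paper writes a disjoint indicator sum, and you spell out the completeness and polar-set transfer steps slightly more explicitly than the paper does.
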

\begin{proof}
 Let $\{S_{(\phi,\G)}\}$ be a $\Theta/\G$-dominating partition of $\Omega$. As $m\G^\Theta=m\G^\Phi$, we know that $X\in\X$ is $\G^\phi$-measurable for all $\phi$. Hence we can use Theorem \ref{thm:classicalessentialsup}(i) to construct the essential supremum $X^*_\phi = \esssup[\phi] \{\X\}$, and then define the `universal' essential supremum by the disjoint sum
\[X^*:= \sum_{\phi\in\Phi} I_{S_{(\phi,\G)}} X_\phi^*.\]
Clearly for any $X\in\X$ we have $X^*\geq X$ q.s. on $S_{(\phi,\G)}$ for all $\phi$, hence by Lemma \ref{lem:hahnlem2}, $X^*\geq X$ q.s. on $\Omega$. It is easy to verify that $X^*$ is unique q.s., as $X^*_\phi$ is unique q.s. for each $\phi$. To show measurability, note that $X^*\in m\G^\phi$ for all $\phi$, so $X^*\in m\G^\Phi$. As $ m\G^\Phi = m\G^\Theta$ by the Hahn property, the result is proven.
\end{proof}

We can now construct, in a q.s. unique way, the supports of the measures $\theta\in\Theta$.

\begin{definition}
 Let $\Theta$ have the Hahn property. For $\theta\in\Theta$, $\phi\in\Phi$, define
\[\lambda_{\theta|\phi} := \frac{d\theta|_\G}{d\phi}I_{S_{(\phi, \G)}}\]
where  by Lemma \ref{lem:hahnlem3} the Radon-Nikodym derivative is well defined $\phi$-a.s. on $S_{(\phi,\G)}$, and hence $\lambda_{\theta|\phi}$ is defined up to a polar set. Then define the $\G^\Theta$-measurable support of $\theta$,
\[S_{(\theta, \G)} := \{\omega: \esssup[\Theta]_{\phi\in\Phi} (\lambda_{\theta|\phi}) >0\} \in \G^\Theta.\]
\end{definition}
\begin{lemma}\label{lem:Sisminimalsupport}
 Any $\G^\Theta$-measurable $\theta$-null subset of $S_{(\theta,\G)}$ is polar.
\end{lemma}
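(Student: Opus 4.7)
The plan is to use part (i) of the Hahn property, which tells us that $\Theta$ and $\Phi$ generate the same polar sets in $\G$: thus it suffices to prove $\phi(A) = 0$ for every $\phi \in \Phi$. So I fix an arbitrary $\phi \in \Phi$ and aim to verify this single condition, after which varying $\phi$ and invoking the Hahn property yields the result.

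The first step is to unpack the definition of $S_{(\theta,\G)}$ on the cell $S_{(\phi,\G)}$. Since $\lambda_{\theta|\phi'}$ vanishes pointwise outside $S_{(\phi',\G)}$, and the cells $\{S_{(\phi',\G)}\}_{\phi' \in \Phi}$ are pairwise disjoint, we have $\lambda_{\theta|\phi'} = 0$ on $S_{(\phi,\G)}$ for every $\phi' \neq \phi$. Following the construction in the proof of Theorem \ref{thm:quasiessentialsup}, the $\Theta$-essential supremum $\esssup[\Theta]_{\phi' \in \Phi}(\lambda_{\theta|\phi'})$ restricted to $S_{(\phi,\G)}$ coincides $\phi$-a.s. with $\lambda_{\theta|\phi}$, and hence q.s. on $S_{(\phi,\G)}$ by Lemma \ref{lem:hahnlem3}. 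This identifies $S_{(\theta,\G)} \cap S_{(\phi,\G)}$ with $\{\lambda_{\theta|\phi} > 0\} \cap S_{(\phi,\G)}$ up to a polar set, so the hypothesis $A \subseteq S_{(\theta,\G)}$ forces $\lambda_{\theta|\phi} > 0$ holding $\phi$-a.s. on $A \cap S_{(\phi,\G)}$.

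The second step is a Radon–Nikodym argument. Because $\lambda_{\theta|\phi}$ is by construction the density of $\theta|_\G$ with respect to $\phi$ on $S_{(\phi,\G)}$, and since $\theta(A) = 0$ gives $\theta(A \cap S_{(\phi,\G)}) = 0$, we obtain
\[
0 = \theta(A \cap S_{(\phi,\G)}) = \int_{A \cap S_{(\phi,\G)}} \lambda_{\theta|\phi} \, d\phi.
\]
Combined with the strict positivity of $\lambda_{\theta|\phi}$ established in step one, this forces $\phi(A \cap S_{(\phi,\G)}) = 0$; since $\phi$ is supported on $S_{(\phi,\G)}$, we conclude $\phi(A) = 0$. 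As $\phi \in \Phi$ was arbitrary, part (i) of the Hahn property delivers $\Theta$-polarity.

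The main obstacle is the first step, establishing that the $\Theta$-essential supremum on $S_{(\phi,\G)}$ collapses to $\lambda_{\theta|\phi}$. This is precisely where the disjointness of the dominating partition and the interplay between $\phi$-a.s. and q.s. equalities (via Lemma \ref{lem:hahnlem3}) are doing the work; once this identification is secured, the remainder is a routine absolute-continuity computation.
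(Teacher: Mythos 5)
Your proof is correct and takes essentially the same route as the paper's: both rest on the observation that $\lambda_{\theta|\phi}>0$ $\phi$-a.s.\ on $S_{(\theta,\G)}\cap S_{(\phi,\G)}$ together with the identity $\theta(A\cap S_{(\phi,\G)})=\int_{A\cap S_{(\phi,\G)}}\lambda_{\theta|\phi}\,d\phi$, the paper phrasing it as a contradiction while you argue directly. Your extra care in showing that the essential supremum collapses to $\lambda_{\theta|\phi}$ on each cell $S_{(\phi,\G)}$ merely spells out what the paper leaves implicit.
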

\begin{proof}
Let $A\in\G^\Theta$ be a $\theta$-null subset of $S_{(\theta, \G)}$. If $A$ is not polar, then there exists $\phi\in\Phi$ such that $\phi(A)>0$. By the definition of $S_{(\theta,\G)}$ and the essential supremum, we know $\lambda_{\theta|\phi}>0$ $\phi$-a.s. on $S_{(\phi, \G)}\cap S_{(\theta, \G)}$, so
\[\theta(A)\geq  \int_{A} \lambda_{\theta|\phi} d\phi > 0,\]
 which implies $A$ is not $\theta$-null, giving a contradiction.
\end{proof}
\begin{remark}
Note that this lemma implies that $S_{(\theta, \G)}$ is the `smallest' $\G^\Theta$-measurable support of $\theta$, in a q.s. sense. That is, if there was another $\G^\Theta$-measurable support $R\subset S_{(\theta, \G)}$, then  we know $S_{(\theta, \G)}\setminus R \in \G^\Theta$ would be $\theta$-null, hence from the lemma it is polar.
\end{remark}
\begin{lemma}\label{lem:equivonsupport}
For any two measures $\theta, \theta'\in\Theta$, their restrictions $\theta|_\G$ and $\theta'|_\G$ are equivalent on the intersection of their supports. That is, if $A\subset S_{(\theta,\G)}\cap S_{(\theta',\G)}$, $A\in\G^\Theta$ is $\theta$-null, it is also $\theta'$-null.
\end{lemma}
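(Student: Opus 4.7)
The plan is to derive this as an almost immediate consequence of Lemma \ref{lem:Sisminimalsupport}, which already characterizes $S_{(\theta,\G)}$ as the minimal $\G^\Theta$-measurable support of $\theta$ in the quasi-sure sense. The point is that $\theta$-nullity on $S_{(\theta,\G)}$ is a strong enough property to force polarity, and polarity trivially transfers to every other measure in $\Theta$.

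Concretely, I would proceed as follows. Let $A \in \G^\Theta$ with $A \subset S_{(\theta,\G)} \cap S_{(\theta',\G)}$ and $\theta(A) = 0$. Since $A \subset S_{(\theta,\G)}$ and $A$ is $\theta$-null, Lemma \ref{lem:Sisminimalsupport} yields that $A$ is polar. By definition of a polar set, $\psi(A) = 0$ for every $\psi \in \Theta$; in particular $\theta'(A) = 0$. Swapping the roles of $\theta$ and $\theta'$ gives the converse implication, so $\theta|_\G$ and $\theta'|_\G$ are equivalent on $S_{(\theta,\G)} \cap S_{(\theta',\G)}$.

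There is no real obstacle here: the proof is a one-line invocation of the preceding lemma together with the definition of polarity. The only thing worth double-checking is that the hypothesis $A \in \G^\Theta$ is exactly what Lemma \ref{lem:Sisminimalsupport} requires, which it is. All of the substantive work has already been done upstream, namely in establishing the Hahn property, constructing the quasi-sure essential supremum (Theorem \ref{thm:quasiessentialsup}), defining $S_{(\theta,\G)}$ via $\esssup[\Theta]_{\phi\in\Phi}(\lambda_{\theta|\phi}) > 0$, and proving minimality of that support.
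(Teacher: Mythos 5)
Your proposal is correct and is exactly the paper's own argument: apply Lemma \ref{lem:Sisminimalsupport} to conclude that a $\theta$-null subset of $S_{(\theta,\G)}$ is polar, hence $\theta'$-null. Nothing further is needed.
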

\begin{proof}
 If $A$ is $\theta$-null it is polar, by Lemma \ref{lem:Sisminimalsupport}, and hence is also $\theta'$-null.
\end{proof}

\begin{remark}\label{rem:namereason}
 This lemma is the reason why we have used the name  `Hahn property'. From this lemma, we see that our assumption allows us to decompose our space into supports for our restricted measures $\Theta|_\G$ such that they are equivalent on the intersection of their supports. When we consider only two measures, this can be done using a combination of the Lebesgue decomposition theorem and the Hahn decomposition theorem. Here we assume enough that we can \emph{simultaneously} find supports for our uncountable family of measures such that the decomposition holds for all pairs, keeping the supports fixed.
\end{remark}

We can also reproduce an aggregation result similar to that of Touzi, Soner and Zhang \cite{Soner2010a}.

\begin{theorem}\label{thm:aggregation}
 Suppose $\Theta$ has the Hahn property on $\G$. Let $\{X^\theta\}_{\theta\in\Theta}$ be any family of functions such that for all $\theta, \psi\in\Theta$
\begin{itemize}
 \item $X^\theta$ is $\G^\theta$-measurable (where $\G^\theta$ is the completion of $\G$ under $\theta)$ and
 \item $X^\theta=X^\psi$ ($\theta$-a.s.) on $S_{(\theta, \G)} \cap S_{(\psi, \G)}$.
\end{itemize}
Then there exists an \emph{aggregation function} $Y$ which is $\G^\Theta$-measurable, such that $Y=X^\theta$ $\theta$-a.s. for all $\theta$.
\end{theorem}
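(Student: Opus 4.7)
My plan is to paste the $X^\theta$ together along the Hahn partition $\{S_{(\phi,\G)}\}_{\phi\in\Phi}$, after first breaking each cell into countably many pieces indexed by $\Theta$. A useful preliminary is that the hypothesis $X^\theta=X^\psi$ $\theta$-a.s.\ on $S_{(\theta,\G)}\cap S_{(\psi,\G)}$ is automatically quasi-sure: by Lemma \ref{lem:equivonsupport} the measures $\theta|_\G$ and $\psi|_\G$ are equivalent on that intersection, while by Lemma \ref{lem:Sisminimalsupport} any $\theta$-null subset of $S_{(\theta,\G)}$ is polar.

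Fix $\phi\in\Phi$. In the probability space $(S_{(\phi,\G)},\G^\phi,\phi)$, the classical fact that the essential union of any family of measurable sets is realised by a countable subfamily produces $\{\theta^\phi_n\}_n\subseteq\Theta$ with $\phi\bigl((S_{(\theta,\G)}\cap S_{(\phi,\G)})\setminus U\bigr)=0$ for every $\theta\in\Theta$, where $U:=\bigcup_n S_{(\theta^\phi_n,\G)}$. The leftover $B:=S_{(\phi,\G)}\setminus U$ receives no mass from any $\theta\in\Theta$ (since $\theta$ lives on $S_{(\theta,\G)}$ and $\theta|_\G\ll\phi$ on $S_{(\phi,\G)}$ by Lemma \ref{lem:hahnlem3}), so $B$ is $\Theta$-polar, hence $\Phi$-polar by Hahn property (i), hence $\phi$-null. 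Thus the $S_{(\theta^\phi_n,\G)}$ cover $S_{(\phi,\G)}$ $\phi$-a.s.

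Picking for each $\theta$ a $\G$-measurable representative $\tilde X^\theta$ of $X^\theta$, define
\[Y^\phi := \sum_{n\geq 1} I_{A^\phi_n}\,\tilde X^{\theta^\phi_n},\qquad A^\phi_n := \bigl(S_{(\theta^\phi_n,\G)}\cap S_{(\phi,\G)}\bigr)\setminus\bigcup_{k<n} S_{(\theta^\phi_k,\G)},\]
and set $Y:=\sum_{\phi\in\Phi} I_{S_{(\phi,\G)}}\,Y^\phi$, pointwise well defined by the disjointness of the partition. Measurability is then immediate: $Y$ agrees with the $\G$-measurable $Y^\phi$ on $S_{(\phi,\G)}$ whose complement is $\phi$-null, so $Y\in m\G^\phi$ for every $\phi$, hence $Y\in m\G^\Phi=m\G^\Theta$ by Hahn (i). For the identification, fix $\theta$: $\theta$-a.e.\ $\omega$ lies in $S_{(\theta,\G)}\cap A^\phi_n$ for uniquely determined $\phi,n$; equivalence of $\theta$ and $\theta^\phi_n$ on $S_{(\theta,\G)}\cap S_{(\theta^\phi_n,\G)}$ promotes $\tilde X^{\theta^\phi_n}=X^{\theta^\phi_n}$ from $\theta^\phi_n$-a.s.\ to $\theta$-a.s.\ there, and combining with the preliminary observation $X^{\theta^\phi_n}=X^\theta$ q.s.\ on that intersection gives $Y=X^\theta$ $\theta$-a.s.

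The main obstacle I anticipate is the middle step: extracting a countable cover of $S_{(\phi,\G)}$ from the uncountable family of $\Theta$-supports and verifying that what is left is $\phi$-null. This is exactly where the absolute continuity in Lemma \ref{lem:hahnlem3} and the polar-system agreement in Hahn property (i) both play essential roles.
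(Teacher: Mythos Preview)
Your argument is correct, but it takes a markedly different route from the paper. The paper's proof is two lines: it simply sets
\[
Y=\esssup[\Theta]_{\theta\in\Theta} X^\theta,
\]
appealing to the quasi-sure essential supremum already built in Theorem~\ref{thm:quasiessentialsup}, and then observes that on $S_{(\theta,\G)}$ the consistency hypothesis forces every $X^\psi$ to agree with $X^\theta$ $\theta$-a.s., so the supremum collapses to $X^\theta$ there.

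Your approach is instead a direct pasting: within each Hahn cell $S_{(\phi,\G)}$ you extract (via the classical essential-union argument) a countable subfamily $\{\theta^\phi_n\}$ whose supports exhaust the cell $\phi$-a.s., glue the $\G$-measurable representatives $\tilde X^{\theta^\phi_n}$ on disjoint pieces $A^\phi_n$, and then sum over the partition. This is more hands-on and does not invoke Theorem~\ref{thm:quasiessentialsup}; in effect you are rebuilding the relevant part of that theorem inside the proof, with a concrete countable selection in place of the abstract $\phi$-essential supremum. One small point worth making explicit in your identification step: for a fixed $\theta$ only countably many $\phi\in\Phi$ carry $\theta$-mass (the $S_{(\phi,\G)}$ being disjoint), so the union over $\phi$ of the polar leftovers $B^\phi$ and of the exceptional sets from the equivalence arguments remains $\theta$-null. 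What the paper's route buys is brevity and reuse of existing machinery; what yours buys is a self-contained construction that makes the underlying countability transparent and sidesteps the mild measurability mismatch (your $X^\theta$ lie in $m\G^\theta$ rather than $m\G^\Theta$) that the essential-supremum invocation glosses over.
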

\begin{proof}
Simply take
\[Y=\esssup[\Theta]_{\theta\in\Theta} X^\theta.\]
For any $\theta\in\Theta$, by our second assumption we see that $Y=X^\theta$ $\theta$-a.s. on $S_{(\theta,\G)}$, and as $S_{(\theta,\G)}$ supports $\theta$, $Y=X^\theta$ $\theta$-a.s.
\end{proof}

As shown in \cite{Soner2010a}, many of the results of stochastic analysis can be obtained as soon as we have a result of this kind. 

\subsection{A dual representation}
We now prove that a modified version of the representation (\ref{eq:supremumrepcond}) is valid.
\begin{lemma}\label{lem:conditionalexpectdominance}
 Let $\E$ be a $\G$-consistent sublinear expectation, with representation $\E(\cdot) = \sup_{\theta\in\Theta} E_\theta[X]$. Then for any $\theta\in\Theta$, any $X$ such that all terms are $\theta$-a.s. finite, any $t<\infty$,
\[-\E(-X|\G)\leq E_\theta[X|\G] \leq \E(X|\G)\qquad \theta-a.s.\]
\end{lemma}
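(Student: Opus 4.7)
The plan is to prove the upper bound $E_\theta[X|\G] \leq \E(X|\G)$ first; the lower bound then follows by applying that result to $-X$ together with linearity of the classical conditional expectation under $\theta$, since $E_\theta[-X|\G] = -E_\theta[X|\G]$ $\theta$-a.s.

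For the upper bound, the key observation is that $Z := X - \E(X|\G)$ satisfies $\E(Z|\G) = 0$ q.s.\ by $\G$-cash additivity, applied with $Y := -\E(X|\G) \in L^1(\E;\G)$. For any $A \in \G^\Theta$, $\G$-regularity then gives $\E(I_A Z|\G) = I_A \E(Z|\G) = 0$ q.s., and applying $\E$ together with recursivity yields $\E(I_A Z) = \E(\E(I_A Z|\G)) = 0$. Since $E_\theta \leq \E$ for every $\theta \in \Theta$, this forces
\[ E_\theta[I_A Z] \leq 0 \qquad \text{for every } A \in \G^\Theta, \]
and in particular for every $A \in \G$.

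I then pass from this family of integral inequalities to the pointwise $\theta$-a.s.\ statement $E_\theta[Z|\G] \leq 0$. If the $\G^\theta$-measurable set $A^* := \{E_\theta[Z|\G] > 0\}$ had $\theta(A^*) > 0$, I could pick $A \in \G$ with $A \triangle A^*$ being $\theta$-null, and the defining property of the classical conditional expectation would give $E_\theta[I_A Z] = \int_{A^*} E_\theta[Z|\G]\,d\theta > 0$, contradicting the bound just established. Hence $E_\theta[Z|\G] \leq 0$ $\theta$-a.s. Because $\E(X|\G) \in m\G^\Theta \subseteq m\G^\theta$, the classical conditional expectation $E_\theta[\E(X|\G)|\G]$ coincides with $\E(X|\G)$ $\theta$-a.s., so linearity of $E_\theta[\cdot|\G]$ rearranges $E_\theta[Z|\G] \leq 0$ into $E_\theta[X|\G] \leq \E(X|\G)$ $\theta$-a.s., as required.

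The step I expect to be the most delicate is verifying the integrability hypotheses needed to invoke $\G$-cash additivity and $\G$-regularity under the weak standing assumption that all relevant quantities are only $\theta$-a.s.\ finite, rather than in $L^1(\E;\F)$; my fallback is to first establish the inequality for the bounded truncations $X \wedge n \vee (-n)$, where all integrability issues are trivial, and then pass to the limit using the monotone continuity of $\E$ together with classical monotone (or dominated) convergence for $E_\theta$.
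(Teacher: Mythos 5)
Your proposal is correct and follows essentially the same route as the paper: show $\E(I_A(X-\E_\G(X)))=0$ for all $A\in\G^\Theta$ via cash additivity, regularity and recursivity, deduce $E_\theta[I_A(X-\E_\G(X))]\leq 0$ from $E_\theta\leq\E$, conclude the $\theta$-a.s. inequality, and obtain the lower bound by applying the result to $-X$. The only difference is that you spell out the standard passage from the family of integral inequalities to the pointwise conditional statement, and flag the truncation fallback for integrability, both of which the paper leaves implicit.
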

\begin{proof}
For any $A\in\G^\Theta$, any $X$ we have
\[\E_\G[I_A(X-\E_t(X))] = \E_\G(I_AX)-I_A\E_\G(X)= 0\]
and so by time consistency $\E_\G[I_A(X-\E_t(X))]=0.$ Hence
\[E_\theta[I_A(X-\E_\G(X))] \leq \E[I_A(X-\E_\G(X))]=0\]
and rearrangement gives $E_\theta[I_AX] \leq E_\theta[I_A\E_\G(X))]$, which is equivalent to the upper bound $ E_\theta[X|\G] \leq \E_\G(X)$. For the lower bound, applying this result to $-X$ gives
\[E_\theta[X|\G] = -E_\theta[-X|\G] \geq -\E_\G(-X) \quad \theta-a.s.\]
\end{proof}

Using the Hahn property, we can consistently define our conditional expectations $E_\theta[\cdot|\G]$ up to equality $\E$-q.s.

\begin{definition}
Suppose $\Theta$ has the Hahn property on $\G$. For each $\theta\in\Theta$, we define 
\[E_{\theta|\Theta}[X|\G] = \begin{cases} Y & \omega \in S_{(\theta, \G)}\\ -\infty & \omega\not\in S_{(\theta, \G)}\end{cases}\]
where $Y\in m\G^\Theta$ is any version of the classical conditional expectation $E_\theta[X|\G]$. By Lemma \ref{lem:Sisminimalsupport}, this definition is unique up to a polar set (as it is unique up to a $\theta$-null subset of $S_{(\theta, \G)}$).
\end{definition}

\begin{remark}
 Note that $E_{\theta|\Theta}[X|\G]$ is a version of the usual conditional expectation, but is defined $\E$-q.s. rather than $\theta$-a.s. Furthermore, $E_{\theta|\Theta}[X|\G]$ satisfies the usual properties of the conditional expectation on $S_{(\theta, \G)}$, i.e. linearity, recursivity, monotonicity, etc., again $\E$-q.s. rather than simply $\theta$-a.s. The reason for setting the expectation to $-\infty$ off $S_{(\theta, \G)}$ is simply so that we can take the supremum in a simple manner. It also gives the following lemma.
\end{remark}

\begin{lemma}
 $E_{\theta|\Theta}[X|\G]$ is the q.s. minimal version of the $\theta$-conditional expectation. That is, if $Y\in m\G^\Theta$ is another version of the conditional expectation and $Y\leq E_{\theta|\Theta}[X|\G]$, then $\{\omega: Y<E_{\theta|\Theta}[X|\G]\}$ is polar.
\end{lemma}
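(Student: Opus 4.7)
The plan is to decompose the set $D := \{\omega : Y < E_{\theta|\Theta}[X|\G]\}$ according to whether $\omega$ lies in the support $S_{(\theta,\G)}$ and handle the two pieces separately, exploiting the definition of $E_{\theta|\Theta}[X|\G]$ off the support together with Lemma \ref{lem:Sisminimalsupport} on it.

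First I would observe that on $S_{(\theta,\G)}$, both $Y$ and $E_{\theta|\Theta}[X|\G]$ are $\G^\Theta$-measurable and both agree with the classical $\theta$-conditional expectation $E_\theta[X|\G]$ on a set of full $\theta$-measure: $Y$ by hypothesis as another version of the conditional expectation, and $E_{\theta|\Theta}[X|\G]$ by its construction. Hence the two coincide $\theta$-a.s.\ on $S_{(\theta,\G)}$, which shows that $D \cap S_{(\theta,\G)} \in \G^\Theta$ is a $\theta$-null subset of $S_{(\theta,\G)}$. This is exactly the hypothesis of Lemma \ref{lem:Sisminimalsupport}, which upgrades $\theta$-nullity to $\Theta$-polarity, so $D \cap S_{(\theta,\G)}$ is polar.

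Second, on the complement $\Omega \setminus S_{(\theta,\G)}$, the definition of $E_{\theta|\Theta}[X|\G]$ assigns the value $-\infty$, so the assumption $Y \leq E_{\theta|\Theta}[X|\G]$ forces $Y = -\infty$ there, and no strict inequality can hold; thus $D \cap (\Omega \setminus S_{(\theta,\G)}) = \emptyset$. Writing $D$ as the union of these two pieces, $D$ is polar, which is the claim. The only non-routine step is the first one, and its entire substance is the appeal to Lemma \ref{lem:Sisminimalsupport}; no obstacle beyond this is anticipated, since the Hahn property has already been used to establish that lemma.
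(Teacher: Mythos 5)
Your proof is correct and is essentially the paper's own argument: off $S_{(\theta,\G)}$ the inequality cannot be strict since $E_{\theta|\Theta}[X|\G]=-\infty$ there, and on $S_{(\theta,\G)}$ the set $\{Y<E_{\theta|\Theta}[X|\G]\}$ is a $\theta$-null $\G^\Theta$-measurable subset of the support, hence polar by Lemma \ref{lem:Sisminimalsupport}. No differences worth noting.
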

\begin{proof}
By definition, $Y=E_{\theta|\Theta}[X|\G]=-\infty$ except on $S_{(\theta,\G)}$. Hence $\{\omega: Y<E_{\theta|\Theta}[X|\G]\}$ is a $\theta$-null subset of $S_{(\theta,\G)}$. By Lemma \ref{lem:Sisminimalsupport}, this set is polar.
\end{proof}

We can now prove our general representation.

\begin{theorem}\label{thm:representation}
 Let $\E$ be a $\G$-consistent sublinear expectation, with a representation $\Theta$ having the Hahn property on $\G$. Then the conditional expectation has a representation
\[\E_\G(X) = \esssup[\Theta]_{\theta\in\Theta}\{E_{\theta|\Theta}[X|\G]\}\]
up to equality q.s.
\end{theorem}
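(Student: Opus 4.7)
My plan is to prove the two q.s.\ inequalities separately. The upper bound is essentially already contained in Lemma \ref{lem:conditionalexpectdominance}: for each fixed $\theta\in\Theta$ that lemma gives $E_\theta[X|\G]\le \E_\G(X)$ on a set of full $\theta$-measure. On $S_{(\theta,\G)}$ the exceptional set is a $\G^\Theta$-measurable, $\theta$-null subset of $S_{(\theta,\G)}$, which is polar by Lemma \ref{lem:Sisminimalsupport}. Off $S_{(\theta,\G)}$ we have $E_{\theta|\Theta}[X|\G]=-\infty$, so the inequality is trivial. Thus $\E_\G(X)$ is a q.s.\ upper bound for every $E_{\theta|\Theta}[X|\G]$, and Theorem \ref{thm:quasiessentialsup} gives $\esssup[\Theta]_{\theta}E_{\theta|\Theta}[X|\G]\le \E_\G(X)$ q.s.

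For the reverse inequality, set $Y:=\esssup[\Theta]_{\theta}E_{\theta|\Theta}[X|\G]$ and, for $\epsilon>0$, $A_\epsilon:=\{\E_\G(X)>Y+\epsilon\}\in\G^\Theta$; it suffices to show each $A_\epsilon$ is polar. Recursivity and $\G$-regularity (Definition \ref{defn:condexpdefn}(ii)--(iii)) give
\[\E(I_{A_\epsilon}X)=\E\bigl(\E_\G(I_{A_\epsilon}X)\bigr)=\E\bigl(I_{A_\epsilon}\E_\G(X)\bigr).\]
On the other hand, for each $\theta\in\Theta$ the classical tower property yields
\[E_\theta[I_{A_\epsilon}X]=E_\theta\bigl[I_{A_\epsilon}E_\theta[X|\G]\bigr]=E_\theta\bigl[I_{A_\epsilon}E_{\theta|\Theta}[X|\G]\bigr]\le E_\theta[I_{A_\epsilon}Y],\]
since $E_\theta[X|\G]=E_{\theta|\Theta}[X|\G]$ $\theta$-a.s.\ on $S_{(\theta,\G)}$ (whose complement is $\theta$-null) and $E_{\theta|\Theta}[X|\G]\le Y$ q.s.\ by the first step. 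Taking the supremum over $\theta$ gives $\E(I_{A_\epsilon}X)\le \E(I_{A_\epsilon}Y)$, and combining with the display above and cash-additivity,
\[\epsilon\,\E(I_{A_\epsilon})\le \E\bigl(I_{A_\epsilon}(\E_\G(X)-Y)\bigr)\le 0,\]
so $\E(I_{A_\epsilon})=0$, i.e.\ $A_\epsilon$ is polar. The countable union over $\epsilon=1/n$ then shows $\{\E_\G(X)>Y\}$ is polar.

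The main obstacle is justifying the manipulations above when $X$ is only in $L^1(\E;\F)$: the rewriting of $\E(I_{A_\epsilon}(\E_\G(X)-Y))$ as a difference requires that $I_{A_\epsilon}Y$ be genuinely $\E$-integrable, which is not automatic because each $E_{\theta|\Theta}[X|\G]$ is set to $-\infty$ off $S_{(\theta,\G)}$ and so $Y$ has no obvious lower bound. Step~1 gives $Y\le \E_\G(X)\in L^1(\E;\G)$, so the difficulty is only the lower tail. I expect this to be handled by a truncation argument: first prove the identity for bounded $X$, where $Y$ is q.s.\ bounded between $\inf X$ and $\E_\G(X)$ and all expectations are finite, and then extend to $X\in L^1(\E;\F)$ by applying the monotone continuity axiom (Definition \ref{defn:staticexpectations}(vi)) to $(X\vee(-K))-X$ and to the increasing truncations of $X^+$ as $K\to\infty$.
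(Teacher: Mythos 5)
Your architecture coincides with the paper's. The inequality $\esssup[\Theta]_{\theta\in\Theta}E_{\theta|\Theta}[X|\G]\le\E_\G(X)$ is obtained exactly as in the paper: Lemma \ref{lem:conditionalexpectdominance} on each $S_{(\theta,\G)}$, polarity of the exceptional set via Lemma \ref{lem:Sisminimalsupport}, triviality off the support, then Theorem \ref{thm:quasiessentialsup}. The reverse direction also follows the paper's chain $\E(I_A\E_\G(X))=\E(I_AX)=\sup_\theta E_\theta[I_A E_\theta[X|\G]]\le\E(I_AY)$, where $Y:=\esssup[\Theta]_{\theta\in\Theta}E_{\theta|\Theta}[X|\G]$, and you are right to try to make the final passage to a q.s.\ pointwise inequality explicit, since the paper leaves it terse.

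However, the step where you pass from $\E(I_{A_\epsilon}\E_\G(X))\le\E(I_{A_\epsilon}Y)$ to $\E(I_{A_\epsilon}(\E_\G(X)-Y))\le 0$ is not valid. Cash-additivity (Definition \ref{defn:staticexpectations}(iii)) applies only to constants, and sublinearity gives $\E(U-V)\ge\E(U)-\E(V)$, which is the wrong direction; for a non-dominated $\Theta$ one cannot in general deduce $\E(U-V)\le 0$, nor $U\le V$ q.s., from $\E(I_AU)\le\E(I_AV)$ for all $A\in\G^\Theta$ (a two-point $\Omega$ with $\Theta$ the two Dirac masses and $U,V$ negative somewhere already defeats it). The correct way to close the argument --- the same device as in the paper's uniqueness lemma --- is to keep the difference inside a single expectation throughout. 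First note $Y\in L^1(\E;\G)$: for each $\theta$ one has $E_\theta[X|\G]\le Y\le\E_\G(X)$ $\theta$-a.s., so $\sup_\theta E_\theta[|Y|]<\infty$; this also disposes of the integrability worry in your last paragraph without any truncation. Then $\G$-regularity and $\G$-cash-additivity (which, unlike unconditional cash-additivity, does apply to the $\G^\Theta$-measurable random variable $I_{A_\epsilon}Y$) give $\E_\G(I_{A_\epsilon}(X-Y))=I_{A_\epsilon}(\E_\G(X)-Y)\ge\epsilon I_{A_\epsilon}$ q.s., whence by recursivity and monotonicity $\E(I_{A_\epsilon}(X-Y))\ge\epsilon\,\E(I_{A_\epsilon})$. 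On the other hand, for each $\theta$, linearity and the tower property yield $E_\theta[I_{A_\epsilon}(X-Y)]=E_\theta[I_{A_\epsilon}(E_\theta[X|\G]-Y)]\le 0$, since $E_\theta[X|\G]=E_{\theta|\Theta}[X|\G]\le Y$ $\theta$-a.s. Taking the supremum over $\theta$ gives $\epsilon\,\E(I_{A_\epsilon})\le 0$, so $A_\epsilon$ is polar. With this substitution your proof is complete and agrees with the paper's.
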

\begin{proof}
 First note that for any $A\in\G^\Theta$,
\[\begin{split}
   \E(I_A\E_\G(X))&=\E(I_AX) = \sup_{\theta\in\Theta} E_\theta[I_AX] = \sup_{\theta\in\Theta} E_\theta[I_AE_{\theta|\Theta}[X|\G]] \\
&\leq \sup_{\theta\in\Theta} E_\theta[I_A(\esssup[\Theta]_{\psi\in\Theta}\{E_{\psi|\Theta}[X|\G]\})] \\
&= \E(I_A (\esssup[\Theta]_{\psi\in\Theta}\{E_{\psi|\Theta}[X|\G]\}))
  \end{split}
\]
from which we see
\[\E_\G(X) \leq \esssup[\Theta]_{\psi\in\theta}\{E_{\psi|\Theta}[X|\G]\} \quad q.s.\]
Conversely, by Lemma \ref{lem:conditionalexpectdominance}, we know that for every $\psi\in\Theta$, $E_{\psi|\Theta}[X|\G] \leq \E_\G(X)$ $\psi$-a.s. By definition, $E_{\psi|\Theta}[X|\G]=-\infty$ except on $S_{(\psi,\G)}$, so by Lemma \ref{lem:hahnlem3} we know
\[E_{\psi|\Theta}[X|\G] \leq \E_\G(X)\quad  q.s.\]
Therefore, by Theorem \ref{thm:quasiessentialsup},
\[\esssup[\Theta]_{\psi\in\theta}\{E_{\psi|\Theta}[X|\G]\}\leq \E_\G(X)\quad q.s.\]
giving the desired equality.
\end{proof}

As mentioned earlier, Bion-Nadal \cite{Bion-Nadal2006} gives a similar result to this, however without a quasi-sure construction of the conditional expectation. Therefore, her result presents only the $\theta$-a.s. equality of the conditional sublinear expectation and the $\theta$-essential supremum. Our result is strictly stronger, as both the equality and the essential supremum are taken in a quasi sure sense.

\begin{remark}\label{rem:conditionaldomain}
 We note that this result immediately allows us to consistently extend $\E_\G$ to the larger space $\H_\F$, using a generalised conditional expectation, as in \cite[p2]{Jacod2003}. That is, we no longer require substantial integrability conditions on $X$ to define $\E_\G(X)$. This will, however, lead to somewhat different statements of the properties of the conditional expectation (as finiteness is no longer guaranteed).
\end{remark}

\subsection{$\G$-consistency and pasting of measures}

Using this result, we can give a type of `pasting stability' of the measures related to $\G$-consistency. This is closely related to the $m$-stability of Delbaen \cite{Delbaen2006a}.

\begin{definition}
 For $\Theta$ with the Hahn property, we say $\Theta$ is \emph{stable under $\G$-pasting} if for any $\theta, \theta' \in \Theta$, any $A\subseteq S_{(\theta, \G)} \cap S_{(\theta',\G)}$, $A\in \G^\Theta$ we have $\psi\in\Theta$, where $\psi$ is the measure on $\Omega$ with
\[\psi(B) := E_\theta[I_AE_{\theta'}[I_B|\G]+I_{A^c}I_B].\]
 For a set $\Theta$, we can define $\Theta^{\G}$, the \emph{finite $\G$-stabilisation of $\Theta$}, as the set of all measures obtained from $\Theta$ through finitely many combinations of this form. Clearly if $\Theta$ has the Hahn property on $\G$ then so will $\Theta^\G$.
\end{definition}
Note that this pasting only needs to hold for $A$ in the intersection of the minimal supports of the two measures. By Lemma \ref{lem:equivonsupport},  $\theta|_\G$ and $\theta'|_\G$ are equivalent on the intersection of their minimal supports, and hence the (classical) conditional expectation can be used without difficulty.

In some applications the analogous stabilisation where countably many combinations are permitted may be of interest (particularly if we wish for the supremum to be attained), however the finite case will be sufficient for our result.

\begin{theorem}
 Let $\E$ be a sublinear expectation with representation $\Theta$. Suppose $\Theta$ has the Hahn property on $\G$. Then
\begin{enumerate}[(i)]
 \item If $\E$ is $\G$-consistent, then $\E$ has an equivalent representation 
\[\E(X) = \sup_{\theta\in\Theta^\G}E_\theta[X].\]
\item If $\Theta=\Theta^\G$ then $\E$ is $\G$-consistent.
\end{enumerate}
\end{theorem}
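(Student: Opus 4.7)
Part (i) is the easier direction. Since $\Theta \subseteq \Theta^\G$, we have $\E(X) \leq \sup_{\theta \in \Theta^\G} E_\theta[X]$ trivially, and by induction on the number of pasting steps the reverse inequality reduces to the one-step case: $\psi(B) = E_\theta[I_A E_{\theta'}[I_B|\G] + I_{A^c}I_B]$ with $A \subseteq S_{(\theta,\G)} \cap S_{(\theta',\G)}$, yielding $E_\psi[X] = E_\theta[I_A E_{\theta'}[X|\G] + I_{A^c} X]$. Lemma \ref{lem:conditionalexpectdominance} gives $E_{\theta'}[X|\G] \leq \E_\G(X)$ $\theta'$-a.s., and the exceptional set intersected with $A \subseteq S_{(\theta',\G)}$ is a $\theta'$-null $\G^\Theta$-measurable subset of $S_{(\theta',\G)}$, hence polar by Lemma \ref{lem:Sisminimalsupport} and therefore also $\theta$-null. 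Applying the analogous bound on $A^c$ with $\theta$ in place of $\theta'$ and invoking $\G$-consistency of $\E$ gives $E_\psi[X] \leq E_\theta[\E_\G(X)] \leq \E(\E_\G(X)) = \E(X)$.

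For part (ii) the natural candidate is $\E_\G(X) := \esssup[\Theta]_{\psi \in \Theta} E_{\psi|\Theta}[X|\G]$, which lies in $m\G^\Theta$ by Theorem \ref{thm:quasiessentialsup}. All of the $\G^\Theta$-conditional coherence properties (monotonicity, sublinearity, cash additivity, coherence, regularity) are inherited componentwise, since each $E_{\psi|\Theta}[\cdot|\G]$ is a classical conditional expectation on $S_{(\psi,\G)}$ and the essential supremum commutes with addition of $\G^\Theta$-measurable cash, multiplication by a non-negative $\G^\Theta$-measurable factor, and pull-out of indicators. The $\G$-triviality $\E_\G(Y) = Y$ q.s. for $Y \in L^1(\E;\G)$ follows from $\bigcup_{\psi\in\Theta} S_{(\psi,\G)}$ having polar complement, since every $\theta\in\Theta$ puts full mass on $S_{(\theta,\G)}$. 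So the real content is recursivity $\E(\E_\G(X)) = \E(X)$, whose $\geq$ inequality is immediate from $E_\theta[\E_\G(X)] \geq E_\theta[E_{\theta|\Theta}[X|\G]] = E_\theta[X]$.

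For the reverse inequality I would fix $\theta \in \Theta$ and approximate $\E_\G(X)$ from below on $S_{(\theta,\G)}$ by conditional expectations of pasted measures in $\Theta$. Given $\psi_1, \ldots, \psi_n \in \Theta$, I split $S_{(\theta,\G)}$ according to which $E_{\psi_i|\Theta}[X|\G]$ attains the maximum, intersect with the relevant $S_{(\psi_i,\G)}$, and iteratively paste each $\psi_i$ on its piece onto $\theta$ as base measure to produce $\psi^* \in \Theta^\G = \Theta$ whose $E_{\psi^*}[X|\G]$ dominates $\max_i E_{\psi_i|\Theta}[X|\G]$ on $S_{(\theta,\G)}$ $\theta$-a.s. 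Combining this with Theorem \ref{thm:classicalessentialsup}(ii) to extract an increasing sequence and monotone convergence delivers $E_\theta[\E_\G(X)] \leq \sup_n E_{\hat\psi_n}[X] \leq \E(X)$ for a sequence of pastings $\hat\psi_n\in\Theta$, and taking the supremum over $\theta$ yields recursivity.

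The main obstacle is the careful bookkeeping for the iterated pasting in part (ii): the formal pasting operation shifts mass only on an intersection of two supports, so the ``maximum'' over a finite family must be realised by a sequence of single-step pastings onto the common base $\theta$. The simplifying observation is that wherever $E_{\psi_i|\Theta}[X|\G] = -\infty$ (i.e.\ outside $S_{(\psi_i,\G)}$) the maximum is automatically carried by some other $\psi_j$, so no pasting is needed there; and Lemma \ref{lem:equivonsupport} ensures that the classical conditional expectations appearing in the pasting are unambiguous on the relevant intersections of supports.
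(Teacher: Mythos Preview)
Your proof is correct and follows essentially the same route as the paper. In part (i) you invoke Lemma~\ref{lem:conditionalexpectdominance} and Lemma~\ref{lem:Sisminimalsupport} directly where the paper cites the representation Theorem~\ref{thm:representation} (which itself rests on those lemmas), and in part (ii) your iterated-pasting construction realising a finite maximum is exactly the mechanism behind the paper's assertion that the family $\{I_A E_{\psi|\Theta}[X|\G]+I_{A^c}E_\theta[X|\G]\}$ is upward directed; both arguments then conclude via Theorem~\ref{thm:classicalessentialsup}(ii) and monotone convergence.
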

\begin{proof}
 \emph{(i)} Suppose $\E$ is $\G$-consistent. Clearly $\Theta\subseteq \Theta^\G$, and so
\[\E(X) \leq \sup_{\theta\in \Theta^\G} E_\theta[X].\]
Conversely, for any $\psi\in\Theta^\G$ we know $\psi$ is of the form
\[\psi(B) = E_\theta\left[\sum_n I_{A_n}E_{\theta_n}[I_B|\G]\right]\]
for some finite partition $\{A_n\}$ of $\Omega$, and some measures $\theta$ and $\theta_n$ in $\Theta$. Then
\[E_\psi[X]=E_\theta\left[\sum_n I_{A_n}E_{\theta_n}[I_B|\G]\right] \leq \sup_\theta E_{\theta}[\esssup[\Theta]_{\theta} E_{\theta|\Theta}[X|\G]] = \E(X)\]
and so
\[\E(X) \geq \sup_{\theta\in \Theta^\G} E_\theta[X].\]

\emph{(ii)} As $\Theta=\Theta^\G$ has the Hahn property, for each fixed $X\in L^1(\E,\F)$ we can define the putative sublinear conditional expectation
\[\tilde\E_\G(X) := \esssup[\Theta]_{\psi\in\Theta} E_{\psi|\Theta}[X|\G].\]
 All the properties of a $\G$-consistent sublinear expectation are trivial to verify except recursivity.

To show recursivity, first select some $\theta\in\Theta$. The quasi-sure essential supremum given by Theorem \ref{thm:quasiessentialsup} must also be a version of the $\theta$-a.s. essential supremum given by Theorem \ref{thm:classicalessentialsup}. As $E_{\psi|\Theta}[X|\G]=-\infty$ except on $S_{(\psi, \G)}$, and $\Theta=\Theta^\G$, we see that
\[\tilde\E_\G(X) = \esssup[\theta]_{\psi\in\Theta, A\subseteq S_{(\theta, \G)} \cap S_{(\psi,\G)}} \{I_{A}E_{\psi|\Theta}[X|\G] + I_{A^c} E_\theta[X|\G]\} \quad \theta-a.s.\]
Furthermore, the family $\{I_{A}E_{\psi|\Theta}[X|\G] + I_{A^c} E_\theta[X|\G]\}$ is upward directed (up to equality $\theta$-a.s.). By Theorem \ref{thm:classicalessentialsup}(ii), we can then find appropriate sequences $\psi_n^\theta$, $A^\theta_n$ such that
\[\{I_{A^\theta_n}E_{\psi_n^\theta|\Theta}[X|\G] + I_{(A^\theta_n)^c}E_\theta[X|\G]\} \uparrow \tilde\E_\G(X) \quad \theta-a.s.\]

We now relax our selection of $\theta$, and consider the equation
\[\begin{split}
   \E(\tilde\E_\G(X)) &= \sup_\theta E_\theta[ \tilde\E_\G(X)]\\
&= \sup_\theta E_\theta[\lim_n\{I_{A^\theta_n}E_{\psi_n^\theta|\Theta}[X|\G] + I_{(A^\theta_n)^c}E_\theta[X|\G]\}]\\
&= \sup_\theta \sup_n E_\theta[I_{A^\theta_n}E_{\psi_n^\theta|\Theta}[X|\G] + I_{(A^\theta_n)^c}E_\theta[X|\G]]\\
&= \sup_\theta \sup_n E_{\theta_n}[X].\\
  \end{split}
\]
where
\[\theta_n(B) := E_\theta[I_{A^\theta_n}E_{\psi_n^\theta|\Theta}[I_B|\G] + I_{(A^\theta_n)^c}E_\theta[I_B|\G]].\]
As we know $\Theta=\Theta^\G$, all the induced measures $\theta_n$ are in $\Theta$. Therefore we have
\[\E(\tilde\E_\G(X)) = \sup_\theta E_\theta[X] = \E(X)\]
and so $\tilde\E_\G(X)$ satisfies the recursivity assumption.
\end{proof}

\section{Integrability and convergence}
We now seek to look at some consequences of this representation. In particular, we shall use the representation of Theorem \ref{thm:representation} to show that, if we have a filtration $\{\F_t\}$ and we can consistently define our sublinear expectation for $\F_t^\Theta$-measurable random variables for any $t<\infty$, then we can consistently define our sublinear expectation for all $\F_{\infty-}^\Theta$-measurable random variables.

For simplicity, we shall assume that time is discrete. However, as we shall make no significant assumptions on the structure of the filtration (beyond the Hahn property), this allows consideration of the `skeleton' of a continuous filtration (possibly at stopping times) with no difficulties.

\begin{definition}\label{defn:SLexpectation}
Let $\{\F_t\}$ be a discrete-time filtration on a measurable space $(\Omega, \F)$. A family of maps
\[\begin{split}
   \E&:\H_\F\to\R\\
   \E_t&: L^1(\E;\F)\to L^1(\E;\F_t)
  \end{split}
\] is called a $\{\F_t\}$-consistent coherent sublinear expectation (on $\F$) if
\begin{enumerate}[(i)]
\item $\E$ is a coherent sublinear expectation and $\E(X)=\E_0(X)$ q.s.
\item (Recursivity) For $s\leq t$ we have $\E_s \circ \E_t = \E_s$ on $L^1(\E;\F)$, that is, $\E_s(\E_t(X))=\E_s(X)$ for all $X\in \H_\F$,
\item ($\F_t$-Regularity) $\E_t(I_A X) = I_A\E_t(X)$ q.s. for all $A\in \F_t^\Theta$, $X\in \H_F$.
\item For all $t$, $\E_t$ satisfies the requirements of a coherent sublinear expectation \emph{$\F_t^\Theta$-conditionally} (as in Definition \ref{defn:condexpdefn}).
\end{enumerate}
A $\{\F_t\}$-consistent sublinear expectation (on $\F$) will, for simplicity, be called an $\SL$-expectation (on $\F$).
\end{definition}

\begin{definition}
 We say that $\Theta$ has the Hahn property on $\{\F_t\}$ if it has the Hahn property on $\F_t$ for all $t$.
\end{definition}

To obtain convergence results, the following concepts are useful, and can be found in \cite{Cohen2011}. For simplicity, we write $L^p$ for $L^p(\E, \F)$.

\begin{definition}
 Consider $K\subset L^1$. $K$ is said to be uniformly integrable (u.i.) if
$\E(I_{\{|X|\geq c\}} |X|)$ converges to $0$ uniformly in $X\in K$ as $c\to
\infty$.
\end{definition}

\begin{definition}
 Let $L^p_b$ be the completion of the set of bounded functions $X\in \H$,
under the norm $\|\cdot\|_p=\E(|\cdot|^p)^{1/p}$. Note that $L^p_b\subset L^p$.
\end{definition}

\begin{lemma}
For each $p\geq1$,
\[L^p_b = \{ X\in L^p : \lim_{n\to \infty} \E(|X|^p I_{\{|X|>n\}})=0\}.\]
\end{lemma}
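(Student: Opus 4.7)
My plan is to prove the two inclusions separately, using a truncation argument for one direction and a direct $L^p$-approximation estimate for the other.

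For the inclusion ``$\supseteq$'', suppose $X\in L^p$ satisfies $\lim_n \E(|X|^p I_{\{|X|>n\}})=0$. I would define the truncations $X_n := (X\wedge n)\vee(-n)$. Each $X_n$ is bounded by $n$ and is $\F^\Theta$-measurable (since $X$ is), so $X_n$ belongs to $\H$ in the convention of the paper (recall $\H$ now denotes $\H^\Theta_\F$, and boundedness makes the integrability condition trivial). The elementary pointwise inequality $|X-X_n|^p \leq |X|^p I_{\{|X|>n\}}$ together with monotonicity of $\E$ gives $\|X-X_n\|_p^p \leq \E(|X|^p I_{\{|X|>n\}}) \to 0$, so $X$ is a $\|\cdot\|_p$-limit of bounded elements of $\H$, i.e., $X\in L^p_b$.

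For the inclusion ``$\subseteq$'', suppose $X\in L^p_b$, so there exists a sequence of bounded $X_k\in\H$ with $M_k:=\sup|X_k|<\infty$ and $\|X-X_k\|_p\to 0$. I would split
\[
\E(|X|^p I_{\{|X|>n\}}) \leq 2^{p-1}\E(|X-X_k|^p) + 2^{p-1}\E(|X_k|^p I_{\{|X|>n\}}) \leq 2^{p-1}\|X-X_k\|_p^p + 2^{p-1}M_k^p\,\E(I_{\{|X|>n\}}).
\]
Given $\varepsilon>0$, I would first fix $k$ large enough that the first term is below $\varepsilon/2$. For the second term, a Markov-type bound for $\E$ (obtained from monotonicity applied to the pointwise inequality $n^p I_{\{|X|>n\}}\leq |X|^p$) yields $\E(I_{\{|X|>n\}})\leq n^{-p}\E(|X|^p)$, which tends to $0$ as $n\to\infty$ since $X\in L^p$. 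Hence $n$ can then be chosen to make the second term below $\varepsilon/2$, proving the tail condition.

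The only step that requires genuine care is checking that the truncations $X_n$ lie in the space $\H$ whose bounded elements generate $L^p_b$. This is where the earlier convention ``we write $\H$ for $\H^\Theta_\F$'' is essential: it allows the truncation of an arbitrary $\F^\Theta$-measurable representative in $L^p$ to be a legitimate approximator. The two remaining ingredients, the convex combination inequality $|a+b|^p\leq 2^{p-1}(|a|^p+|b|^p)$ and the Markov bound, are both valid from monotonicity and constant-invariance alone and need no more than the standing properties of $\E$ in Definition \ref{defn:staticexpectations}.
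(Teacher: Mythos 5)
Your proof is correct. The paper itself states this lemma without proof (it is imported from \cite{Cohen2011}), and your argument — truncation at level $n$ with the pointwise bound $|X-X_n|^p\leq |X|^p I_{\{|X|>n\}}$ for one inclusion, and the $2^{p-1}$-splitting plus Markov bound for the other — is exactly the standard route one would take; both inclusions use only monotonicity and sublinearity of $\E$, which hold for $\bar\E$ on $L^1(\E)$. You are also right that the only delicate point is reading the bounded approximators as bounded elements of $\H^\Theta_\F$ (i.e.\ bounded $\F^\Theta$-measurable functions) rather than of the original $\H$ of Definition \ref{defn:Hdefn}; under that reading, which the statement of the lemma effectively requires for the inclusion $\supseteq$ to hold, your proof is complete.
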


\begin{definition}
 We say $X_n$ is a uniformly integrable $\E$-submartingale if $X_n$ is $\F_n^\Theta$-measurable for all $n$, $\{X_n\}$ is uniformly integrable and $X_n\leq \E_n(X_{n+1})$ q.s. for all $n$. Similarly we define $\E$-supermartingales and $\E$-martingales.
\end{definition}

In \cite{Cohen2011}, we have obtained the following convergence result in this space.

\begin{theorem} \label{thm:supermartconvergence}
 Let $\{X_n\}_{n\in\mathbb{N}}$ be a uniformly integrable $\E$-submartingale. Then $X_n$ converges quasi surely and in $L^1(\E)$ to some random variable $X_\infty$. Furthermore, the process $\{X_n\}_{n\in \mathbb{N}\cup
\{\infty\}}$ is also a uniformly integrable $\E$-submartingale. In particular, this implies that $X_\infty \in L^1_b$. The same result holds for $\E$-supermartingales and $\E$-martingales.
\end{theorem}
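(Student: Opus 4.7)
My plan is to adapt the classical Doob submartingale convergence theorem to the quasi-sure setting, using the dual representation of $\E$ and the Hahn property as the bridge between q.s.\ statements and classical statements under individual $\theta\in\Theta$. There are three deliverables: q.s.\ convergence of $X_n$ to a finite limit $X_\infty$, $L^1(\E)$ convergence, and the extension of the submartingale property to $\{X_n\}_{n\in\N\cup\{\infty\}}$.

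The backbone of the argument would be a quasi-sure Doob upcrossing inequality $(b-a)\E(U_n^{[a,b]}) \leq \E((X_n - a)^+)$, where $U_n^{[a,b]}$ counts the upcrossings of $[a,b]$ by $X_0,\dots,X_n$. The auxiliary process $Y_k:=(X_k-a)^+$ is itself an $\E$-submartingale since $\E_k(Y_{k+1})\geq \E_k(X_{k+1}-a)\geq X_k-a$ and $\E_k(Y_{k+1})\geq 0$. Granted the upcrossing bound, uniform integrability gives $\sup_n\E(U_n^{[a,b]})<\infty$, and monotone continuity of $\E$ applied to $U_n^{[a,b]}\uparrow U_\infty^{[a,b]}$ yields $\E(U_\infty^{[a,b]})<\infty$, hence $U_\infty^{[a,b]}<\infty$ q.s. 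Intersecting over a countable dense set of rational pairs $a<b$ produces q.s.\ convergence of $X_n$ to some $X_\infty\in[-\infty,\infty]$; a q.s.\ Fatou inequality $\E(\liminf_n|X_n|)\leq\liminf_n\E(|X_n|)<\infty$, itself a direct consequence of monotone continuity applied to $\inf_{n\geq k}|X_n|\uparrow\liminf_n|X_n|$, rules out infinite limits.

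For $L^1(\E)$ convergence I would use uniform integrability directly against the sup representation. Given $\epsilon>0$, choose $c$ with $\sup_n\E(|X_n|I_{\{|X_n|>c\}})<\epsilon$; the truncated differences $(X_n-X_\infty)I_{\{|X_n|\leq c,\,|X_\infty|\leq c\}}$ are uniformly bounded and converge q.s.\ to zero, so classical bounded convergence under each $\theta$ combined with the sup representation forces the $\E$ of this piece to zero uniformly, while the complementary piece is controlled by UI and sublinearity. The submartingale property at infinity then follows by passing to the limit $m\to\infty$ in $X_n\leq\E_n(X_m)$: $\G$-sublinearity and triviality give $|\E_n(X_m)-\E_n(X_\infty)|\leq\E_n(|X_m-X_\infty|)$, and recursivity gives $\E(\E_n(|X_m-X_\infty|))=\E(|X_m-X_\infty|)\to 0$. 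That $X_\infty\in L^1_b$ is then immediate from the $L^1_b$ characterisation stated just above the theorem.

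The main obstacle is the upcrossing inequality itself. Classically one uses the linear identity $Y_n-Y_0=(H\cdot Y)_n+((1-H)\cdot Y)_n$ for the $\{0,1\}$-valued predictable process $H$ indicating upcrossings, then discards the nonnegative-in-expectation transform $((1-H)\cdot Y)$. Sublinearity of $\E$ breaks the additive split, and worse, an $\E$-submartingale need not be a $\theta$-submartingale for any individual $\theta$ (since $E_\theta[X_{n+1}\mid\F_n]$ may lie strictly below $\E_n(X_{n+1})$), so the naive reduction via Theorem~\ref{thm:representation} to classical convergence under each $\theta$ also fails. The clean route is to prove a quasi-sure optional sampling theorem $X_\sigma\leq \E_\sigma(X_\tau)$ for bounded $\F_t$-stopping times $\sigma\leq\tau$ by finite induction on the values of $\tau$ (using $\G$-regularity, $\G$-coherence and cash additivity), and then accumulate $X_{\tau_k}-X_{\sigma_k}\geq b-a$ across completed upcrossings before taking $\E$ at the final step. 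This is technically involved but routine, and matches the approach used in \cite{Cohen2011}.
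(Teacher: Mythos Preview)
The paper does not actually prove this theorem; it is quoted verbatim from \cite{Cohen2011}, so there is no in-paper argument to compare against. Your outline tracks the expected classical route, and the quasi-sure convergence half is sound: you correctly identify that an $\E$-submartingale need not be a $\theta$-submartingale, so the $\theta$-by-$\theta$ reduction fails, and that the right substitute is a quasi-sure optional sampling theorem proved by finite induction on the values of the bounded stopping times, from which the upcrossing bound follows.

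There is, however, a genuine gap in your $L^1(\E)$ step. You assert that the truncated differences are bounded and converge q.s.\ to zero, and that ``classical bounded convergence under each $\theta$ combined with the sup representation forces the $\E$ of this piece to zero''. This inference is false: dominated (hence bounded) convergence does \emph{not} hold for coherent sublinear expectations. Take $\Omega=[0,1]$, $\Theta=\{\delta_x:x\in[0,1]\}$, and $Y_n=I_{(0,1/n]}$; then $Y_n\to 0$ everywhere (and the only polar set is $\emptyset$), $|Y_n|\leq 1$, yet $\E(Y_n)=\sup_{x}Y_n(x)=1$ for every $n$. The pointwise convergence $E_\theta[Y_n]\to 0$ is simply not uniform in $\theta$, and property~(vi) of Definition~\ref{defn:staticexpectations} gives only continuity from \emph{below}. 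Thus uniform integrability together with q.s.\ convergence is insufficient on its own in this setting; the (sub)martingale structure must be used substantively at this stage, not merely to reach q.s.\ convergence. Your closing step---passing the inequality $X_n\leq\E_n(X_m)$ to $m=\infty$ via $\E(\E_n(|X_m-X_\infty|))=\E(|X_m-X_\infty|)$---is fine once $L^1(\E)$ convergence is in hand, so the issue is localised to that one transition.
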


\begin{lemma}\label{lem:universal test measures}
 For each $T\in\mathbb{N}$, let $\E^T_t(\cdot)$ be an $\SL$-expectation on $\F_T$, with the consistency property that
\[\E^T_t(X) = \E^{T'}_t(X)\quad \text{ for all }X\in\H_{\F_{T'}},\quad \text{all }t\leq T'\leq T\]
Then there exists a set of test probability measures $\Theta$ such that $\E^T(X) = \sup_{\theta\in\Theta}E_\theta[X]$ for all $T$.
\end{lemma}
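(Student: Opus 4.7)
The plan is to apply Theorem~\ref{thm:dualrep} at each finite $T$, use the consistency of the family $\{\E^T\}$ to organise the resulting representing sets into a projective system, and form the universal $\Theta$ as the projective limit. Specifically, for each $T$ I let $\Theta^T$ be the maximal set of $\sigma$-additive probability measures on $(\Omega,\F_T)$ dominated by $\E^T$; by Theorem~\ref{thm:dualrep} this set represents $\E^T$, and the consistency $\E^T|_{\H_{\F_{T'}}}=\E^{T'}$ forces the restriction map to carry $\Theta^T$ into $\Theta^{T'}$ for $T'\le T$. Setting $\F_\infty:=\bigvee_T\F_T$, the universal set will be
\[
\Theta := \bigl\{\,\theta\text{ probability on }(\Omega,\F_\infty)\ :\ \theta|_{\F_T}\in\Theta^T\text{ for every }T\,\bigr\}.
\]
For $X\in\H_{\F_T}$ and $\theta\in\Theta$ one has $E_\theta[X]=E_{\theta|_{\F_T}}[X]\le\E^T(X)$, so the easy half $\sup_{\theta\in\Theta}E_\theta[X]\le\E^T(X)$ is immediate.

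The matching lower bound I would reduce to surjectivity of the restriction $\Theta\to\Theta^{T_0}$, which in turn I would handle by one-step extension plus Kolmogorov/Carath\'eodory assembly. Given $\theta^T\in\Theta^T$, the linear functional $E_{\theta^T}$ on $\H_{\F_T}$ is dominated by $\E^T=\E^{T+1}|_{\H_{\F_T}}$; Hahn--Banach then produces a linear extension $L$ on $\H_{\F_{T+1}}$ satisfying the two-sided sandwich $-\E^{T+1}(-\cdot)\le L\le\E^{T+1}(\cdot)$, which (via positivity, cash additivity and $L(1)=1$) gives a finitely additive probability on $\F_{T+1}$. After upgrading to a $\sigma$-additive $\theta^{T+1}\in\Theta^{T+1}$ and iterating, the resulting consistent family $(\theta^{T_0+k})_{k\ge 0}$ defines a countably additive set function on the algebra $\bigcup_T\F_T$, and Carath\'eodory extends it to a single $\theta\in\Theta$ with $\theta|_{\F_{T_0}}=\theta^{T_0}$.

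The hard part will be the $\sigma$-additivity step in the one-step extension. If $\E^{T+1}$ were linear this would be immediate from its monotone continuity, but sublinearity breaks the naive argument: for $A_n\downarrow\emptyset$ the upper bound $L(1_{A_n})\le\E^{T+1}(1_{A_n})=\sup_{\psi\in\Theta^{T+1}}\psi(A_n)$ need not vanish even though each $\psi(A_n)\to 0$. My plan for this step is to pick the Hahn--Banach extension more carefully --- for instance a Zorn-minimal extension in the pointwise order on nonnegatives --- and to couple the one-sided monotone continuity of $\E^{T+1}$ (applied via cash additivity to $1_{A_n^c}\uparrow 1$, which yields $-\E^{T+1}(-1_{A_n})\to 0$) with the $\sigma$-additivity of the starting measure $\theta^T$ that already constrains $L$ on the $\F_T$-part. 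Once $\sigma$-additivity is in hand, the Carath\'eodory/Kolmogorov assembly is routine in discrete time.
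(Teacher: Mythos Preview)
Your route is quite different from the paper's and contains a genuine gap at exactly the point you flag. The proposed repairs for $\sigma$-additivity of the one-step Hahn--Banach extension do not work. The observation $-\E^{T+1}(-1_{A_n})\to 0$ only yields $\liminf_n L(1_{A_n})\ge 0$, which is automatic since $L$ is positive; what you need is $\limsup_n L(1_{A_n})=0$, and the available upper bound $L(1_{A_n})\le\E^{T+1}(1_{A_n})=\sup_{\psi}\psi(A_n)$ need not tend to zero, because monotone continuity of $\E^{T+1}$ is only from below. A ``Zorn-minimal'' choice of extension has no evident connection with countable additivity, and the $\sigma$-additivity of the seed $\theta^T$ constrains $L$ only on $\F_T$, saying nothing about decreasing sequences in $\F_{T+1}\setminus\F_T$. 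There is also a second, unflagged gap: even granting a consistent tower $(\theta^T)_T$ of $\sigma$-additive measures, the induced set function on the algebra $\bigcup_T\F_T$ is a priori only \emph{finitely} additive; countable additivity on that algebra is precisely the content of a Kolmogorov-type extension theorem and in general requires topological regularity (Polish or standard Borel structure) that the lemma does not assume, so calling the Carath\'eodory step ``routine in discrete time'' elides a real hypothesis.

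The paper avoids both difficulties by never attempting to extend an individual measure. It fixes, for each $n$, an arbitrary representing set $\Theta_n$ for $\E^n$, observes that the consistency hypothesis makes every $\Theta_n$ with $n\ge m$ already a representing set for $\E^m$, and then takes the single family $\Theta=\limsup_n\Theta_n=\bigcap_k\bigcup_{n\ge k}\Theta_n$, using that $\sup_{\theta\in\Theta'_k}E_\theta[X]$ is constant in $k$ for the nested sets $\Theta'_k=\bigcup_{n\ge k}\Theta_n$. No Hahn--Banach extension, no projective-limit construction, and no Kolmogorov assembly enter the argument.
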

\begin{proof}
For each $n$, let $\Theta_n$ be a set of test measures for $\E^n$. By the assumed consistency property, for $m\leq n$, for any $X\in\H_{\F_m}$,
\[\sup_{\theta\in\Theta_n}E_\theta[X] = \E^n(X) = \E^m(X) = \sup_{\theta\in\Theta_m}E_\theta[X]\]
that is, $\Theta_n$ is also a valid set of test measures for $\E^m$. It follows that, without loss of generality, we can take $\Theta'_m = \bigcup_{n\geq m} \Theta_n$, as a set of test measures for $\E^m$. Furthermore, as $\E^m(X)=\sup_{\theta\in\Theta_n}E_\theta[X]$ does not vary with $n\geq m$ and $\Theta'_n$ is nondecreasing, we have,
\[\E^m(X) = \inf_{n\geq m} \sup_{\theta\in\Theta'_n} E_\theta[X] = \sup_{\theta\in\bigcap_{k\geq m}\Theta'_k} E_\theta[X],\]
that is, without loss of generality, $\Theta''_m = \lim\sup_{n\geq m}\Theta_n = \bigcap_{k\geq m} \bigcup_{n\geq k} \Theta_n$ is also a set of test measures for $\E^m$. As the $\lim\sup_{n\geq m}$ is independent of $m$, it follows that there exists a single set of test measures $\Theta$ such that $\E^m(X) = \sup_{\theta\in\Theta} E_\theta[X]$ for all $m$.
\end{proof}

\begin{remark}
The set $\Theta$ constructed in the previous Lemma is generally not unique, as the finite-time expectations $\E^m$ can give us no information on the `correct' capacity $\E(I_A)$ associated with an event in the tail $\sigma$-algebra.
\end{remark}

Using our convergence result and our earlier representation, we can now show how to extend a finite-horizon $\SL$-expectation to an infinite horizon.
\begin{theorem}\label{thm:SLextensiontheorem}
 Let $\E^T$ and $\Theta$ be as in Lemma \ref{lem:universal test measures}, and suppose $\Theta$ has the Hahn property on $\{\F_t\}$. Then the operator
\[\E(X):=\sup_{\theta\in\Theta}E_\theta[X]\]
is an $\SL$-expectation on $L^1_b(\E;\F_{\infty-})$. In particular, it is time consistent.
\end{theorem}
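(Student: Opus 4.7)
The plan is to define the candidate conditional expectation by the quasi-sure essential supremum formula
\[\E_t(X) := \esssup[\Theta]_{\theta \in \Theta} E_{\theta|\Theta}[X \mid \F_t], \qquad X \in L^1_b(\E; \F_{\infty-}),\]
which is well-defined under the Hahn property by Theorem \ref{thm:quasiessentialsup}, and then to verify the axioms of Definition \ref{defn:SLexpectation}. On the sub-class $\bigcup_T L^1(\E; \F_T)\cap L^1_b$, this formula coincides with the given finite-horizon $\E^T_t$ by Theorem \ref{thm:representation} applied to the $\F_t$-consistent sublinear expectation $\E^T$, so the conditional $\SL$-axioms---monotonicity, regularity, sublinearity, coherence, and finite-horizon recursivity $\E_s\circ\E_t = \E_s$ for $s\leq t\leq T$---are inherited for free there. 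The unconditional axioms of $\E = \sup_{\theta \in \Theta} E_\theta[\cdot]$ on $\H_{\F_{\infty-}}$ come directly from the supremum representation, with monotone continuity reducing to classical monotone convergence under each $\theta$.

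The substantive step is recursivity on all of $L^1_b(\E; \F_{\infty-})$. Because each $\E^T$ is $\F_t$-consistent, the preceding theorem on $\G$-consistency and pasting lets us replace $\Theta$ (without changing $\E$) by its $\F_t$-stabilisation at every finite $t$, so one may assume throughout that $\Theta$ is stable under $\F_t$-pasting for every finite $t$. Under this assumption the family $\{E_{\theta|\Theta}[X|\F_t]\}_{\theta\in\Theta}$ is upward directed quasi-surely: pasting $\theta$ with $\theta'$ at $\F_t$ on $A=\{E_{\theta|\Theta}[X|\F_t]\geq E_{\theta'|\Theta}[X|\F_t]\}\in \F_t^\Theta$ produces a measure in $\Theta$ whose $\F_t$-conditional is q.s.\ the pointwise maximum. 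Fix $\psi\in\Theta$: on $S_{(\psi,\F_s)}$ the quasi-sure essential supremum reduces to the $\psi$-essential supremum by Lemma \ref{lem:hahnlem3}, so Theorem \ref{thm:classicalessentialsup}(ii) supplies a $\psi$-a.s.\ increasing sequence $\{E_{\theta_n|\Theta}[X|\F_t]\}_n$ with $\psi$-a.s.\ limit $\E_t(X)$. Monotone convergence combined with the pasting identity $E_{\psi|\Theta}[E_{\theta_n|\Theta}[X|\F_t]\mid\F_s] = E_{\bar\theta_n|\Theta}[X|\F_s]$ for the paste $\bar\theta_n\in\Theta$ of $\psi$ with $\theta_n$ at $\F_t$ (which agrees with $\psi$ on $\F_s$) then yields
\[E_{\psi|\Theta}[\E_t(X)\mid\F_s] \;=\; \lim_n E_{\bar\theta_n|\Theta}[X\mid\F_s] \;\leq\; \esssup[\Theta]_{\phi\in\Theta} E_{\phi|\Theta}[X\mid\F_s] \;=\; \E_s(X)\qquad \text{q.s.\ on }S_{(\psi,\F_s)}.\]
Taking essential supremum over $\psi$ gives $\E_s(\E_t(X))\leq \E_s(X)$ q.s., while the reverse inequality is immediate from $\E_t(X)\geq E_{\theta|\Theta}[X|\F_t]$ q.s.\ for every $\theta$.

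The remaining integrability claim that $\E_t$ maps into $L^1_b(\E;\F_t)$ then follows from the $\E$-martingale $X_T := \E_T(X)$: the recursivity just proved makes $\{X_T\}$ into a uniformly integrable $\E$-martingale (UI coming from the $L^1_b$-characterisation of $X$ together with the sublinearity estimate $|X_T|\leq \E_T(|X|)$), so Theorem \ref{thm:supermartconvergence} supplies quasi-sure and $L^1(\E)$-convergence of $X_T$ and places $\E_t(X) = X_t$ in $L^1_b$. The main obstacle throughout is reconciling the uncountable family $\Theta$ with the classical measure-by-measure tools---the directed-family essential supremum theorem and monotone convergence---that underlie the inequality above; the Hahn property, and in particular Lemma \ref{lem:equivonsupport} together with the pasting stability handed down from the $\SL$-structure of each $\E^T$, is precisely what allows these facts to be aggregated into quasi-sure statements holding simultaneously over all of $\Theta$ rather than one measure at a time.
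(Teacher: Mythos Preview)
Your route differs from the paper's. The paper does not invoke pasting stability for the infinite-horizon step at all: having first verified finite-horizon recursivity $\E_r\circ\E_s=\E_r$ on $L^1(\E;\F_t)$ for every finite $t$ (via Theorem~\ref{thm:representation} and the assumed consistency of each $\E^T$), it argues by approximation. It uses Theorem~\ref{thm:supermartconvergence} to obtain $\E_t(X)\to X$ in $L^1(\E)$, sets $\E_s^{(t)}(X):=\E_s(\E_t(X))$, and passes the already-known finite-horizon identity $\E_r\circ\E_s^{(t)}=\E_r^{(t)}$ through the limit $t\to\infty$ using the $L^1$-Lipschitz continuity of $\E_r,\E_s$.

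Your direct pasting argument has a genuine gap at the step where you ``replace $\Theta$ (without changing $\E$) by its $\F_t$-stabilisation.'' The pasting theorem you invoke (part~(i)) asserts only that $\E^T(X)=\sup_{\theta\in\Theta^{\F_t}}E_\theta[X]$ for $X$ in the domain of $\E^T$, i.e.\ for $\F_T$-measurable $X$ with $T<\infty$; its proof \emph{uses} the given $\F_t$-consistency of $\E^T$ on that domain. It says nothing about whether $\sup_{\theta\in\Theta^{\F_t}}E_\theta[X]=\sup_{\theta\in\Theta}E_\theta[X]$ for general $X\in L^1_b(\E;\F_{\infty-})$. If the stabilisation strictly enlarges $\E$ on $\F_{\infty-}$, your pasted measures $\bar\theta_n$ need not lie in the original $\Theta$, and the key inequality $E_{\bar\theta_n|\Theta}[X\mid\F_s]\leq \E_s(X)$---where $\E_s$ is the esssup over the \emph{original} $\Theta$---is unjustified. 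Tracing through, one sees that bounding $E_\psi[X]$ for a pasted $\psi$ by $\E(X)$ reduces to $\E(\E_t(X))\leq\E(X)$, which is exactly the recursivity you are trying to prove. The natural way to close this circularity is to approximate $X$ by $\F_T$-measurable random variables and pass to the limit; but that is precisely the paper's argument, so your ``alternative'' route is not independent of it.
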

\begin{proof}
 Let $\E_s:= \esssup[\Theta]_{\theta\in\Theta}\{E_{\theta|\Theta}[X|\F_s]\}$. Then it is easy to show that, for any $r\leq s\leq t<\infty$, $\E_r(\E_s(X))=\E_r(X)$ for all $X\in L^1(\E;\F_t)$. We need to show this holds for $t=\infty$.

 For $X\in L^1(\E;\F_{\infty-})$, we know that $\E_t(X)$ is an $\E$-martingale, hence $\E_t(X)\to X$ q.s. and in $L^1(\E)$, by Theorem \ref{thm:supermartconvergence}. Define the operator
\[\E_s^{(t)}(X): = \E_s(\E_t(X))\]
and then by continuity of $\E_s$ in $L^1(\E)$ we have $\E_s(X) = \lim_t\E_s^{(t)}(X)$ where the limit holds q.s. and in $L^1(\E)$. Hence we have
\[\begin{split}
   \E_r(\E_s(X)) &= \E_r(\lim_t\E_s^{(t)} (X)) = \lim_t \E_r(\E_s (\E_t(X))) \\
&= \lim_t \E_r(\E_t(X)) = \lim_t \E_r^{(t)}(X) = \E_r(X),
  \end{split}
\]
where the third inequality is because recursivity holds for finite-time-measurable variables.
\end{proof}
\section{Conclusion}
We have considered sublinear expectations on general probability spaces, where the set of measures in the dual representation of the expectation are not necessarily absolutely continuous with respect to any dominating measure. In this context, we have shown that the assumption of a Hahn property provides a simple means to aggregate processes defined with respect to each measure, thereby giving a straightforward approach to quasi-sure analysis in this context.

Our methods generalise the approach of \cite{Soner2010a}, as the Hahn property has a natural interpretation in a general setting. Consequently, this paper provides a quasi-sure construction of the conditional expectation under each test measure, and shows that a dual representation then holds for the conditional sublinear expectation. We have given a version of the aggregation result of \cite{Soner2010a}.

For any specific problem, determining whether the Hahn property holds may be a difficult task, (as is made clear by the analysis in \cite{Soner2010a}). However, our approach shows that for any given problem, once the Hahn property has been shown, many of the results of stochastic analysis transfer simply into a quasi-sure setting.

\bibliographystyle{plain}
\bibliography{../RiskPapers/General}
\end{document}